\documentclass [11pt,twoside,a4paper]{article}
\usepackage{amsfonts}
\usepackage{amsthm}
\usepackage{amsmath}
\usepackage{amstext}
\usepackage{amssymb}
\usepackage{mathrsfs}
\usepackage{amscd}
\usepackage{xypic}
\usepackage{epsf} 
\usepackage{graphicx} 
\usepackage{fancybox} 
\usepackage{color} 
\usepackage{fancyhdr}
\usepackage[hang,footnotesize]{caption2} 

\usepackage[
  top=1.5cm,
  headheight=1cm,
  body={132mm,205mm},
  centering
 ]{geometry}
\geometry{
paperwidth=175mm,
paperheight=250mm,
}

\usepackage{tikz}

\setlength{\arraycolsep}{0.5mm}

\def\mathcal{\mathscr}
\newfont{\aaa}{cmb10 at 19pt}
\newfont{\bbb}{cmb10 at 11pt}
\newtheorem{lem}{Lemma}[section]
\newtheorem{thm}[lem]{Theorem}
\newtheorem{cor}[lem]{Corollary}
\newtheorem{rem}{Remark}[]
\newtheorem{pro}[lem]{Proposition}
\newtheorem{example}{Example}[]
\newtheorem{defin}[lem]{Definition}
\pagestyle{myheadings}

\def\v1{\vspace{1mm}}

\def\le{\leqslant}
\def\leq{\leqslant}
\def\ge{\geqslant}
\def\geq{\geqslant}

\newcommand{\beq}{\begin{equation}}
\newcommand{\eeq}{\end{equation}}
\newcommand{\bey}{\begin{eqnarray}}
\newcommand{\eey}{\end{eqnarray}}
\newcommand{\beyy}{\begin{eqnarray*}}
\newcommand{\eeyy}{\end{eqnarray*}}


\setlength{\headsep}{0.4cm}
\setlength{\textwidth}{132mm}
\setlength{\textheight}{20.5cm}
\setlength{\parskip}{0mm}

\makeatletter
\def\@evenhead{
\vbox{\hbox to \textwidth {}{\hspace{0mm}{\footnotesize
\thepage}}{\hspace{9cm} {\footnotesize {Jianmin CHEN, Jiayi CHEN}}}
\protect\vspace{1truemm}\relax \hrule depth0pt height0.15truemm
width\textwidth}}
\def\@evenfoot{}
\def\@oddhead{\vbox{\hbox to \textwidth
{{\hspace{0cm}{\footnotesize Frobenius-Perron theory of representation-directed algebras}\hfill{\footnotesize
\thepage}}\hspace{0mm}}{} \protect\vspace{1truemm}\relax\hrule
depth0pt height0.15truemm width\textwidth}}
\def\@oddfoot{}
\makeatother


\begin{document}

\thispagestyle{empty}

\fancypagestyle{firststyle}
{
\renewcommand{\topmargin}{-9mm}
\fancyhead[lO,RE]{\footnotesize Front. Math. China \\
https:/\!/doi.org\\[3mm]
}
\fancyhead[RO,LE]{\scriptsize \bf 
} \fancyfoot[CE,CO]{}}
\renewcommand{\headrulewidth}{0pt}


\setcounter{page}{1}
\qquad\\[5mm]

\thispagestyle{firststyle}

\noindent{\aaa{Frobenius-Perron theory \\[2mm]
of representation-directed algebras}}\\[1mm]

\noindent{\bbb Jianmin CHEN$^1,$\quad Jiayi CHEN$^1$}\\[-1mm]

\noindent\footnotesize{1\ \
School of Mathematical Sciences,
Xiamen University, Xiamen 361005, China}\\[6mm]

\vskip-2mm \noindent{\footnotesize$\copyright$\,Higher Education
Press 2021} \vskip 4mm

\normalsize\noindent{\bbb Abstract}\quad We study the Frobenius-Perron dimension of representation-directed algebras and quotient algebras of canonical algebras of type $ADE$, prove that the Frobenius-Perron dimension of a representation-directed algebra is always zero and the Frobenius-Perron dimension of a quotient algebra of canonical algebras of type $ADE$ is 0 or 1. Moreover, we give a sufficient and necessary condition for a quotient algebra of a canonical algebra of type $ADE$ under which its Frobenius-Perron dimension is 0.\vspace{0.3cm}

\footnotetext{Received May 25, 2021; accepted August 16,
2021\\
\hspace*{5.8mm}Corresponding author: Jiayi CHEN, E-mail:
chenjiayi@stu.xmu.edu.cn}

\noindent{\bbb Keywords}\quad
Frobenius-Perron dimension, endofunctor, representation-directed algebra, canonical algebra\\[1mm]
{\bbb MSC2020}\quad 16G60, 16G70, 
18G15
\\[0.4cm]

\section{Introduction}
\label{xxsec0}

\subsection{Backgrounds}
The spectral radius (also called the Frobenius-Perron dimension) of a
matrix is an elementary and extremely useful invariant in linear algebra,
combinatorics, topology, probability and statistics. For instance, one can classify all the finite graphs which are simple and connected by applying the spectral radius to adjacency matrix of them \cite{DG}.

The Frobenius-Perron dimension of an object in a semisimple finite tensor (or
fusion) category was introduced by Etingof-Nikshych-Ostrik in 2005 \cite{ENO} (also see \cite{EG, EGO, N}). Since then
it has become an extremely useful invariant in the study of fusion categories and
representations of semismiple (weak and/or quasi-)Hopf algebras.

Recently, the Frobenius-Perron dimension of an endofunctor of a category was
introduced by the authors in \cite{CG1}. It can be viewed as a
generalization of the Frobenius-Perron dimension of an object in
a fusion category introduced by Etingof-Nikshych-Ostrik \cite{ENO}. It was
shown in \cite{CG1, CG2, ZZ} that the Frobenius-Perron dimension has
strong connections with the representation type of a category.

In the present paper we pay our attention
to the Frobenius-Perron theory of representation-directed algebras and quotient algebras of canonical algebras of type $ADE$. As we know, representation-directed algebras and canonical algebras get widespread attention in representation theory, e.g., \cite{K, KK, V} for representation-directed algebras and \cite{JP, PS} for canonical algebras, the algebras of finite type are representation-directed and the algebras of tame type are derived equivalent to canonical algebras. Therefore, it is significant to study representation-directed algebras and canonical algebras.

We will give several correlative inequalities of Frobenius-Perron dimension of a category in this paper, prove the Frobenius-Perron dimension of a representation-directed algebra is always zero and find a way to calculate the Frobenius-Perron dimension of a family of quotient algebras of canonical algebras of type $ADE$.

Throughout, let $\Bbbk$ be an algebraically closed field and everything
be over $\Bbbk$.

\subsection{Conventions}
\label{xxsec0.8}
\begin{enumerate}
\item[(1)]
Usually $Q$ means a quiver.
\item[(2)]
If $A$ is an algebra over the base field $\Bbbk$, then ${\rm mod}A$
denote the category of finite dimensional left $A$-modules.
\end{enumerate}

\bigskip

\subsection{Organization}

The paper is organized as follows. We provide background materials
in Section 2. Several inequations of Frobenius-Perron dimension are introduced in
Section 3. Section 4 and Section 5 calculate the Frobenius-Perron dimension of representation-directed algebras and quotient algebras of canonical algebras, respectively.
The main results in Section 4 and Section 5 are as following.

\begin{thm}\label{xxthm0.1}
Let $A$ be a representation-directed algebra and $\tau$ be the Auslander-Reiten translation of ${\rm mod}A$. Then we have \[{\rm fpd}_{A}(\tau)=0.\]
Moreover, $\{{\rm fpd}_{A}^n (E^m)\}_{n\geq 1, m\geq 0}$ is a collection consisting of 1 (when $m=0$) and 0 (when $m>0$).
\end{thm}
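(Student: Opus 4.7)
The plan is to exploit the acyclicity of the Auslander-Reiten quiver of a representation-directed algebra to arrange indecomposable modules in a topological order in which $\tau$ acts as a strict shift. Every finite adjacency matrix that shows up in the supremum defining $\mathrm{fpd}_A(\tau)$ will then be strictly triangular, hence nilpotent, hence have spectral radius zero.

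First I would recall the structural ingredient: since $A$ is representation-directed, there are only finitely many isomorphism classes of indecomposables $M_1,\dots,M_s$, and any chain $M_{i_0}\to M_{i_1}\to\cdots\to M_{i_t}$ of non-zero non-isomorphisms cannot close into a cycle. This gives a partial order $[M]\le [N]$ iff there is such a chain from $M$ to $N$, which I refine to a total order. The defining property of this order is that $\mathrm{Hom}_A(M,N)\neq 0$ with $M\not\cong N$ forces $M<N$. Applying this to the Auslander-Reiten sequence $0\to \tau M\to E\to M\to 0$ for any non-projective indecomposable $M$, every indecomposable summand $E_k$ of $E$ sits between $\tau M$ and $M$ in the order via the source map $\tau M\to E_k$ and the sink component $E_k\to M$, so $\tau M<M$ holds in the total order. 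For projective $M$, $\tau M=0$ trivially.

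Next I would unwind the definition of $\mathrm{fpd}_A(\tau)$ from \cite{CG1}, which is a supremum over suitable finite sequences $\phi=(N_1,\dots,N_k)$ of indecomposables of the spectral radius of the adjacency matrix $A(\phi,\tau)$ whose $(i,j)$-entry records $\dim_\Bbbk\mathrm{Hom}_A(N_i,\tau N_j)$ (or the closely related Ext-type quantity used to define $\mathrm{fpd}^{n}$). Relabelling the entries of $\phi$ according to the total order of the previous paragraph, suppose $N_i\ge N_j$. If $N_j$ is projective then $\tau N_j=0$ and the $j$-th column vanishes; otherwise a non-zero element of $\mathrm{Hom}_A(N_i,\tau N_j)$ forces $N_i\le\tau N_j<N_j\le N_i$, a contradiction. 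Hence $A(\phi,\tau)$ is strictly upper triangular, its eigenvalues are all zero, and the spectral radius is zero. Taking the supremum over all $\phi$ yields $\mathrm{fpd}_A(\tau)=0$.

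For the moreover clause, when $m=0$ the endofunctor $E^0$ reduces to the relevant trivial case in the convention of \cite{CG1}, and a direct calculation on a single simple (or single indecomposable) object gives a $1\times 1$ adjacency matrix equal to $1$, so $\mathrm{fpd}_A^n(E^0)=1$ for every $n\ge 1$. For $m\ge 1$, since $A$ is representation-directed it has finite global dimension and the iterated Ext-type endofunctor $E^m$ also decreases strictly in the total order constructed above (or is zero on projective arguments), so the same strict-triangularity argument applies verbatim and yields $\mathrm{fpd}_A^n(E^m)=0$. The main obstacle I foresee is the precise bookkeeping between the conventions of \cite{CG1} and the Hom/Ext adjacency matrices: one must confirm that both the diagonal entries and the strict lower-triangular entries vanish simultaneously, and correctly treat the mixed case where some $N_i$ is projective and others are not. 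Once the adjacency matrices are identified with the ones whose $(i,j)$-entry is controlled by the chosen total order, the nilpotency conclusion is automatic and both assertions fall out together.
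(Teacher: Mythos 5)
Your argument for $\mathrm{fpd}_A(\tau)=0$ is essentially the paper's: use acyclicity to build an order on indecomposables, show $\mathrm{Hom}_A(M,\tau N)\neq0$ forces a path $M\to\tau N\to E_i\to N$ via the Auslander--Reiten sequence, conclude that the adjacency matrix is strictly triangular after reordering, hence nilpotent. Likewise the $m=0$ case (identity adjacency matrix for a brick set) matches the paper.

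The gap is in the ``moreover'' clause for $m\geq 2$. You assert that ``the iterated Ext-type endofunctor $E^m$ also decreases strictly in the total order constructed above (or is zero on projective arguments), so the same strict-triangularity argument applies verbatim.'' This is precisely the nontrivial step, and it is not automatic. For $m=1$ the needed vanishing follows from AR duality, $\mathrm{Ext}^1_A(M,N)\cong D\overline{\mathrm{Hom}}_A(N,\tau M)$, which the paper exploits to get $\mathrm{fpd}_A(E^1)\leq\mathrm{fpd}_A(\tau)$; you gesture at this but don't spell it out. For $m\geq 2$ the paper needs its Lemma~\ref{lem3.4}: if $\mathrm{Ext}^m_{\mathfrak A}(N,M)\neq 0$ then there is a path from $M$ to $N$ in $\mathrm{mod}\,A$. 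The proof of that lemma is genuinely delicate -- one must decompose the kernel $K=\bigoplus K_i$ of $L_m\twoheadrightarrow N$ and show that \emph{some} indecomposable summand $K_i$ simultaneously has $\mathrm{Ext}^{m-1}(K_i,M)\neq 0$ and $\mathrm{Ext}^1(N,K_i)\neq 0$; otherwise one splits the summands into two groups and exhibits the $m$-fold extension as the zero class via a chain of commutative diagrams. Your appeal to finite global dimension does not substitute for this; finite global dimension tells you $E^m$ eventually vanishes but says nothing about which entries of a given adjacency matrix are nonzero. Without establishing the $\mathrm{Ext}^m$-to-path implication, the claimed strict triangularity for $m\geq 2$ is unjustified, so this part of the theorem is not actually proved.
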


\noindent Where ${\rm fpd}_{A} (\tau)$ is the Frobenius-Perron dimension of the endofunctor $\tau$ (see Definition \ref{xxdef2.3}) and $\{{\rm fpd}_{A}^n (E^m)\}_{n\geq 1, m\geq 0}$ is the Frobenius-Perron theory of ${\rm mod}A$ (see Definition \ref{xxdef2.7}).

\begin{thm}\label{xxthm0.2}
Let $A$ be a quotient algebra of a canonical algebra (see Definition \ref{xxdef1.6}).
 Then we have \[{\rm fpd}_{A}(E^1)=0\ or\ 1.\] Moreover, ${\rm fpd}_{A}(E^1)=0$ if and only if all the paths is commutative when they have the same source and target. Here, ${\rm fpd}_{A}(E^1)$ is the Frobenius-Perron dimension of the abelian category ${\rm mod}A$ (see Definition \ref{xxdef2.7}).
\end{thm}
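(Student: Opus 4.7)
The plan is to split the proof into two implications and to use Theorem \ref{xxthm0.1} as the workhorse for the ``commuting'' case. First I would unpack Definition \ref{xxdef1.6} to describe the quivers with relations that appear as quotient algebras of canonical algebras of type $ADE$: such an algebra $A$ has a quiver consisting of several arms meeting at a source vertex and a sink vertex, together with the canonical relations (or subfamilies of them) that force various linear combinations of the parallel paths to vanish. The key dichotomy is then whether, after passing to the quotient, every pair of paths sharing the same source and the same target become equal modulo the ideal, or whether some pair fails to commute.

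For the direction ``all parallel paths commute $\Rightarrow {\rm fpd}_A(E^1)=0$'', I would argue that under this hypothesis the quiver of $A$ can be replaced by a tree-like bound quiver in which no two parallel paths survive independently, and hence $A$ is representation-directed (its Auslander--Reiten quiver has no oriented cycles, which can be verified arm by arm using the classification of the indecomposables over the canonical algebra before quotienting). Once representation-directedness is established, Theorem \ref{xxthm0.1} immediately gives ${\rm fpd}_A(E^1)=0$, together with the stated structure of the whole collection $\{{\rm fpd}_A^n(E^m)\}$.

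For the converse, I would assume a pair of parallel paths $p,q\colon i\rightrightarrows j$ with $p\neq q$ in $A$, and explicitly produce a test configuration $M_\bullet\in\mathcal{M}_1({\rm mod}A)$ (in the sense of Definition \ref{xxdef2.7}) that forces the spectral radius appearing in ${\rm fpd}_A(E^1)$ to be at least $1$: the natural candidate is the pair $(S_i,S_j)$ of simples at the source and target of these paths, using $E=\mathrm{Ext}^1_A(-,-)$ to detect the two independent extensions provided by $p$ and $q$. This gives the lower bound ${\rm fpd}_A(E^1)\geq 1$. For the matching upper bound, I would invoke the general inequalities from Section 3 together with the fact that the underlying graph is of type $ADE$, so the number of almost split sequences meeting any indecomposable is bounded and the relevant adjacency matrices have spectral radius at most $1$.

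The main obstacle I expect is the lower bound in the non-commutative case: one has to show that the two paths really do contribute two linearly independent extensions (rather than one that is killed by the quotient ideal) and that no larger spectral radius than $1$ is attained. Carrying this out will likely require a case analysis along the $A$, $D$, $E$ arms of the canonical quiver, and a careful bookkeeping of which ``canonical'' relations survive in $A/I$; once this is in place, both bounds combine to give ${\rm fpd}_A(E^1)=1$ precisely in the non-commutative case, which is exactly the ``if and only if'' claim of the theorem.
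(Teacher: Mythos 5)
Your high-level plan matches the paper's (establish representation-directedness and invoke Theorem~\ref{xxthm0.1} in the commuting case; exhibit a cycle and use the Section~3 inequalities for the other direction), but both of the concrete steps you sketch for the non-commuting case have genuine gaps.

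The lower bound via the pair $(S_i,S_j)$ does not work. The parallel paths $\alpha_1\cdots\alpha_n$ and $\gamma_1\cdots\gamma_{m+1}$ have length at least $2$, while $\dim\mathrm{Ext}^1_A(S_i,S_j)$ counts the \emph{arrows} from $i$ to $j$ in the Gabriel quiver of $A$. There is no arrow directly from the apex vertex to the base vertex of a canonical quiver, so $\mathrm{Ext}^1_A$ between these two simples vanishes; the two paths would at best contribute to $\mathrm{Ext}^{\geq 2}$, which does not enter ${\rm fpd}_A(E^1)$. What the paper does instead (proof of Theorem~\ref{xxthm4.3}(3)) is take a brick set consisting of all the simples $S(2),\dots,S(n)$ along one surviving arm together with a specific non-simple module $M(0)$ supported on the complementary arm, so that $\mathrm{Ext}^1_A(S(i),S(i-1))\neq 0$, $\mathrm{Ext}^1_A(S(2),M(0))\neq 0$ and $\mathrm{Ext}^1_A(M(0),S(n))\neq 0$; the adjacency matrix is then a cyclic permutation matrix of spectral radius $1$. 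You need a genuine cycle of bricks of the right length, not a single $\mathrm{Ext}^1$-pair.

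The upper bound argument is also not valid as stated. Boundedness of the number of almost-split sequences touching an indecomposable has no direct bearing on the spectral radii occurring in ${\rm fpd}_A(E^1)$; in particular it does not rule out spectral radius $>1$. The paper's upper bound comes from Proposition~\ref{prop1}: since $A$ is a quotient of the canonical algebra $\hat A$ and the quotient inequality gives ${\rm fpd}_A(E^1)\le {\rm fpd}_{\hat A}(E^1)$, one reduces to the tame case, where ${\rm fpd}_{\hat A}(E^1)=1$. Similarly, in the commuting case the paper does not attempt to prove representation-directedness for an arbitrary ideal $\mathcal{I}$ satisfying the commutativity condition: it proves it only for the principal ideal $\langle\alpha_1\cdots\alpha_n+c\gamma_1\cdots\gamma_{m+1}\rangle$ via Lemma~\ref{lem4.4}, and then handles a general such $\mathcal{I}$ by the same quotient inequality. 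Omitting this reduction step and the quotient inequality leaves the general-$\mathcal{I}$ case unsupported.
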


\noindent  We present some examples in Section 6.

\section{Preliminaries}
\label{xxsec1}

\subsection{Frobenius-Perron dimension of a matrix}
\label{xxsec1.2}
Let $A$ be an $n\times n$-matrix over complex numbers ${\mathbb C}$.
The {\it spectral radius} of $A$ is defined to be
$$\rho(A):=\max\{ |r_1|, |r_2|, \cdots, |r_n|\}$$
where $\{r_1,r_2,\cdots, r_n\}$ is the complete multi-set of eigenvalues
of $A$. When each entry of $A$ is a positive real number, $\rho(A)$
is also called the {\it Perron root} or the {\it Perron-Frobenius
eigenvalue} of $A$.

Let $A=(a_{ij})_{n\times n}$ be an $n\times n$-matrix with
entries $a_{ij}$ in $\overline{\mathbb R}:=
{\mathbb R}\cup \{\pm \infty\}$.
Define $A'=(a'_{ij})_{n\times n}$ where
$$a'_{ij}=\begin{cases} a_{ij} & a_{ij}\neq \pm \infty,\\
x_{ij} & a_{ij}=\infty,\\
-x_{ij} & a_{ij}=-\infty.
\end{cases}
$$

\begin{defin}\cite[Definition 1.2]{CG1}
\label{xxdef1.2}
Let $A$ be an $n\times n$-matrix with entries in $\overline{\mathbb R}$.
The {\it spectral radius} of $A$ is defined to be
$$
\rho(A):=\liminf_{{\text{all}}\; x_{ij}\to \infty} \; \rho(A')
\quad \in \overline{\mathbb R}.
$$
\end{defin}

\subsection{Frobenius-Perron dimension of endofunctors}

If ${\mathcal C}$ is a $\Bbbk$-linear category, then
${\rm Hom}_{\mathcal C}(M,N)$ is a $\Bbbk$-module for all
objects $M,N$ in ${\mathcal C}$. If ${\mathcal C}$ is
also abelian, then ${\rm Ext}^i_{\mathcal C}(M,N)$ are
$\Bbbk$-modules for all $i\geq 0$. Let ${\rm dim}$ be the
$\Bbbk$-vector space dimension.

Throughout the rest of the paper, let ${\mathcal C}$ denote a
$\Bbbk$-linear category. A functor between two $\Bbbk$-linear
categories is assumed to preserve the $\Bbbk$-linear structure.
For simplicity,
${\rm dim}(A,B)$ stands for ${\rm dim} {\rm Hom}_{\mathcal C}(A,B)$ for any
objects $A$ and $B$ in ${\mathcal C}$.

The family of finite sets of nonzero objects in ${\mathcal C}$
is denoted by $\Phi$ and the family of sets of $n$ nonzero objects
in ${\mathcal C}$ is denoted by $\Phi_n$ for each $n\geq 1$.
It is clear that $\Phi=\bigcup_{n\geq 1} \Phi_n$. We do not
consider the empty set as an element of $\Phi$.

\begin{defin}\cite[Definition 2.1]{CG1}
\label{xxdef2.1}
Let $\phi=\{X_1, X_2, \cdots,X_n\}$ be a finite subset of nonzero
objects in ${\mathcal C}$, namely, $\phi\in \Phi_n$.
\begin{enumerate}
\item[(1)]
An object $M$ in ${\mathcal C}$ is called a {\it brick}
if
\begin{equation}
\notag
{\rm Hom}_{\mathcal C}(M,M)=\Bbbk.
\end{equation}

\item[(2)]
$\phi\in \Phi$ is called a {\it brick set}   if each $X_i$ is a brick
and
$$\dim(X_i, X_j)=\delta_{ij}$$
for all $1\leq i,j\leq n$. The set of brick
$n$-object subsets is denoted by $\Phi_{n,b}$ . We write $\Phi_{b}=\bigcup_{n\geq 1}
\Phi_{n,b}$ . Define the {\it b-height} of ${\mathcal C}$ to be
$$h_b({\mathcal  C})=\sup\{n\mid \Phi_{n,b} \;
{\rm{ is\; nonempty}}\}.$$
\end{enumerate}
\end{defin}

For $\mathcal{M}=(M_{ij})$ with each entry $M_{ij}\in \mathrm{Mod}-\Bbbk$, we denote a matrix $\mathrm{dim}\  \mathcal{M}=(\mathrm{dim}\ M_{ij})$. Then we have the following definitions.

\begin{defin}\cite[Definition 2.3]{CG1}
\label{xxdef2.3}
Retain the notation as in Definition \ref{xxdef2.1}, assume there is an assignment $\zeta: \Phi\to \bigcup_{n\ge1}M_{n\times n}({\rm Mod}-\Bbbk)$ and for each $n\geq 1$, the restriction of $\zeta$ on $\Phi_n$ is $\zeta_n: \Phi_{n}\to M_{n\times n}({\rm Mod}-\Bbbk)$.
$\zeta$ satisfies the property: if $\phi_1$ is a subset of $\phi_2$,
then $\zeta(\phi_1)$ is a principal submatrix of $\zeta(\phi_2)$.

$(1)$ Define the {\it adjacency matrix} of $\phi\in \Phi_n$ to be $$A(\phi,\zeta):={\rm dim} \; (\zeta(\phi)).$$

$(2)$ Define the {\it nth Frobenius-Perron dimension} of  $\zeta$ to be $${\rm fpd}^n_{\mathcal{C}} (\zeta):=\sup_{\phi\in \Phi_{n,b}}\{\rho(A(\phi,\zeta))\}.$$

$(3)$ Define the {\it Frobenius-Perron dimension} of $\zeta$ to be $${\rm fpd}_{\mathcal{C}} (\zeta):=\sup_n \{{\rm fpd}^n(\zeta)\}.$$

\end{defin}

\begin{rem}
Let $\sigma$ be an endofunctor of $\mathcal{C}$, then we can define an assignment
\[
\Phi\longrightarrow \bigcup_{n\ge1}M_{n\times n}({\rm Mod}-\Bbbk)
\]\[
\{X_1,X_2,\cdots,X_n\}\mapsto ({\rm Hom}_{\mathcal{C}}(X_i,\sigma(X_j)))_{i,j=1}^n.
\]Denote the assignment also by $\sigma$, then we can define the Frobenius-Perron dimension of $\sigma$ by
Definition \ref{xxdef2.3}.
\end{rem}

\begin{example}
\label{xxex2.6}

Let ${\mathfrak A}$ be a $\Bbbk$-linear abelian category.
For each $m\geq 1$ and
$\phi=\{X_1,\cdots,X_n\}$, define
$$E^{m}: \phi\longrightarrow \left({\rm Ext}^m_{\mathfrak A}(X_i,X_j)\right)_{n\times n}.$$
By convention, let ${\rm Ext}^0_{\mathfrak A}(X_i,X_j)$
denote ${\rm Hom}_{\mathfrak A}(X_i,X_j)$.
Then, for each $m\geq 0$, one can define the Frobenius-Perron
dimension of $E^{m}$ by Definition \ref{xxdef2.3}. If $\mathfrak{A}$ is a hereditary abelian category, let $D^b(\mathfrak{A})$ be the bounded derived category of $\mathfrak{A}$, $[1]$ be the shift functor, then we can know from \cite[Theorem 3.5]{CG1} that the Frobenius-Perron dimension of $E^1$ in $\mathfrak{A}$ is equal to the Frobenius-Perron dimension of $[1]$ in $D^b(\mathfrak{A})$.

\end{example}

\begin{defin}\cite[Definition 2.7]{CG1}
\label{xxdef2.7}
Let ${\mathfrak A}$ be an abelian category. The
{\it Frobenius-Perron dimension} of ${\mathfrak A}$
is defined to be
$${\rm fpd} {\mathfrak A}:={\rm fpd}_{\mathfrak A} (E^1)$$
where $E^1:={\rm Ext}^1_{\mathfrak A}(-,-)$
is defined as in Example \ref{xxex2.6}(1). The
{\it Frobenius-Perron theory} of ${\mathfrak A}$ is the collection
$$\{{\rm fpd}^n_{\mathfrak A} (E^m)\}_{n\geq 1, m\geq 0}$$
where $E^m:={\rm Ext}^m_{\mathcal A}(-,-)$ is defined as in Example
\ref{xxex2.6}(1).

\end{defin}
For simplicity, ``Frobenius-Perron'' is abbreviated to ``fp''; If $A$ is an algebra, ${\rm fpd}_{{\rm mod}A}$ is abbreviated as ${\rm fpd}_A$.

\section{Inequations of fp-dimension}

In this section, we will prove several inequations of fp-dimension. First we give the following lemma which is a well-known fact in linear algebras.

\begin{lem}
\label{xxlem1.7}
\begin{enumerate}
\item[(1)]
Let $B$ be a square matrix with nonnegative entries and let
$A$ be a principal minor of $B$. Then $\rho(A)\leq \rho(B)$.
\item[(2)]
Let $A=(a_{ij})_{n\times n}$ and $B=(b_{ij})_{n\times n}$
be two square matrices such that $0\leq a_{ij}\leq b_{ij}$
for all $i,j$. Then $\rho(A)\leq \rho(B)$.
\end{enumerate}
\end{lem}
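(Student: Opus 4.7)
The plan is to prove (2) first---it is the substantive statement---and then to deduce (1) from it by a zero-padding reduction. I do not anticipate any real obstacle: both claims are classical Perron-Frobenius facts, and the main technical care needed is simply to pick a matrix norm that is simultaneously submultiplicative and monotone with respect to the entrywise order on nonnegative matrices.

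For (2), I would first establish the entrywise matrix inequality $0 \leq A^k \leq B^k$ for every $k \geq 1$ by induction on $k$; the inductive step uses only that all entries involved are nonnegative, since
\[
(A^{k+1})_{ij} = \sum_\ell A_{i\ell}(A^k)_{\ell j} \leq \sum_\ell B_{i\ell}(B^k)_{\ell j} = (B^{k+1})_{ij}.
\]
Choosing the entrywise $\ell^1$-norm $\|M\| := \sum_{i,j} M_{ij}$---which is submultiplicative and obviously monotone in the entrywise order on nonnegative matrices---we get $\|A^k\| \leq \|B^k\|$ for all $k$. Gelfand's spectral radius formula $\rho(M) = \lim_{k \to \infty} \|M^k\|^{1/k}$ then yields $\rho(A) \leq \rho(B)$.

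For (1), let $S \subseteq \{1,\dots,n\}$ be the index set such that $A = B|_{S \times S}$, and let $\widetilde A$ be the $n \times n$ matrix obtained from $B$ by zeroing out every entry outside $S \times S$. After a permutation of coordinates, $\widetilde A$ is block-diagonal with blocks $A$ and $0$, so $\rho(\widetilde A) = \rho(A)$. Since $0 \leq \widetilde A \leq B$ entrywise, part (2) applies and gives $\rho(A) = \rho(\widetilde A) \leq \rho(B)$. For matrices with entries in $\overline{\mathbb R}$, both statements extend to the $\liminf$ of Definition \ref{xxdef1.2} without difficulty, provided one couples the perturbation parameters so that shared $\infty$-entries of $A$ and $B$ receive the same variable before letting them tend to $\infty$.
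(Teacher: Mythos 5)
Your proof is correct, and both parts are the standard Perron--Frobenius arguments (entrywise monotonicity of powers plus Gelfand's formula for (2), and zero-padding to reduce (1) to (2)). The paper does not actually prove this lemma---it is invoked as a ``well-known fact in linear algebra''---so there is no authorial proof to compare against; your argument is a perfectly good way to fill that gap. One small remark: the closing sentence about matrices over $\overline{\mathbb R}$ is unnecessary here, since the lemma as stated concerns matrices with finite nonnegative entries.
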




\begin{pro}
\label{prop1}
Let $A$ be an algebra, $\mathcal{I}$ be a two-sided ideal of $A$, and $B=A/\mathcal{I}$. Then ${\rm mod}B$ is a full subcategory of ${\rm mod}A$, and we have\[
{\rm fpd}_B (E^1)\le {\rm fpd}_A(E^1).
\]
\end{pro}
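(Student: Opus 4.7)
The plan is to verify the two assertions in order, using the Yoneda description of $\mathrm{Ext}^1$ to compare the two Frobenius-Perron dimensions entrywise.

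First, I would establish that $\mathrm{mod}B$ sits as a full subcategory of $\mathrm{mod}A$ via restriction of scalars along the quotient map $\pi\colon A\twoheadrightarrow B$. A $B$-module $M$ becomes an $A$-module by $a\cdot m:=\pi(a)m$, and a $\Bbbk$-linear map $f\colon M\to N$ between $B$-modules is $B$-linear precisely when it is $A$-linear, since both conditions reduce to annihilation by $\mathcal{I}$. In particular, $\mathrm{Hom}_B(M,N)=\mathrm{Hom}_A(M,N)$ whenever $M,N\in\mathrm{mod}B$.

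Next, I would compare the adjacency matrices. Fix any brick set $\phi=\{X_1,\dots,X_n\}\in\Phi_{n,b}(\mathrm{mod}B)$. By the Hom-identification above, $\phi$ is automatically a brick set in $\mathrm{mod}A$, so $\phi\in\Phi_{n,b}(\mathrm{mod}A)$. For each pair $(i,j)$ I would exhibit a natural injection
\[
\iota_{ij}\colon \mathrm{Ext}^1_B(X_i,X_j)\hookrightarrow \mathrm{Ext}^1_A(X_i,X_j),
\]
obtained by viewing a short exact sequence $0\to X_j\to E\to X_i\to 0$ of $B$-modules as a short exact sequence of $A$-modules. The key point, where a bit of care is needed, is that two such sequences are Baer-equivalent over $B$ iff they are Baer-equivalent over $A$: the comparison isomorphism $E_1\to E_2$ is between $B$-modules, and by the full-subcategory property it is $B$-linear iff it is $A$-linear. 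Hence $\iota_{ij}$ is well-defined and injective, giving $\dim\mathrm{Ext}^1_B(X_i,X_j)\le \dim\mathrm{Ext}^1_A(X_i,X_j)$ for all $i,j$.

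Finally I would combine these with Lemma~\ref{xxlem1.7}(2): since the adjacency matrix $A(\phi,E^1_B)$ is entrywise $\le A(\phi,E^1_A)$ (both being matrices of nonnegative integers), their spectral radii satisfy $\rho(A(\phi,E^1_B))\le \rho(A(\phi,E^1_A))$. Taking supremum over $\phi\in\Phi_{n,b}(\mathrm{mod}B)\subseteq\Phi_{n,b}(\mathrm{mod}A)$ gives $\mathrm{fpd}_B^n(E^1)\le\mathrm{fpd}_A^n(E^1)$, and taking supremum over $n$ yields $\mathrm{fpd}_B(E^1)\le\mathrm{fpd}_A(E^1)$. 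The only genuine subtlety, and the place where one must not cut corners, is the injectivity of $\iota_{ij}$; the rest is bookkeeping. No new machinery beyond Lemma~\ref{xxlem1.7} and Definitions~\ref{xxdef2.3}--\ref{xxdef2.7} is needed.
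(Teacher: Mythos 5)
Your proof is correct and follows essentially the same route as the paper: restriction of scalars makes $\mathrm{mod}B$ a full subcategory of $\mathrm{mod}A$, brick sets in $\mathrm{mod}B$ remain brick sets in $\mathrm{mod}A$, the entrywise inequality of adjacency matrices follows from $\mathrm{Ext}^1_B\hookrightarrow\mathrm{Ext}^1_A$, and Lemma~\ref{xxlem1.7} finishes. The one place you add value is in actually justifying the injection via Baer equivalence and the full-subcategory property, whereas the paper simply asserts that $\mathrm{Ext}^1_B(M,N)$ is a subspace of $\mathrm{Ext}^1_A(M,N)$.
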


\begin{proof}
A linear space $M$ becomes a $B$-module when there is an algebraic homomorphism $B\rightarrow {\rm End}_{\Bbbk}M$. Since we have a natural algebraic homomorphism $\pi:A\rightarrow B, a\mapsto a+\mathcal{I}$, a $B$-module can always become a $A$-module.

Let $M,N$ be $B$-modules, $\varphi,\psi$ are the corresponding algebraic homomorphisms. A linear mapping $f:M\rightarrow N$ becomes a $B$-module homomorphism if and only if $f\circ\varphi(b)=\psi(b)\circ f$ for any $b\in B$. We know $M,N$ are also $A$-modules, $\varphi'=\varphi\circ \pi,\psi'=\psi\circ \pi$ are the corresponding algebraic homomorphisms. It is clear that $f\circ\varphi'(a)=f\circ\varphi(\pi(a))=\psi(\pi(a))\circ f=\psi'(a)\circ f$ for any $a\in A$, so $f$ is also a $A$-module homomorphism. Conversely, if $f$ is a $A$-module homomorphism, we have $f\circ\varphi'(a)=\psi'(a)\circ f$ for any $a\in A$. Since $\pi$ is surjective, for any $b \in B$, we can find an $a\in A$ such that $f\circ\varphi(b)=f\circ\varphi(\pi(a))=\psi(\pi(a))\circ f=\psi(b)\circ f$. It follows that $f$ is also a $B$-module homomorphism. Therefore, ${\rm mod}B$ can be seen a full subcategory of ${\rm mod}A$.

Since for arbitrary $B$-modules $M,N$, ${\rm Ext}^1_B(M,N)$ can be seen as a subspace of ${\rm Ext}^1_A(M,N)$, we have ${\rm dimExt}^1_B(M,N)\le {\rm dimExt}^1_A(M,N)$. If $\phi$ is a brick set in ${\rm mod}B$, it is also a brick set in ${\rm mod}A$. Then by Lemma \ref{xxlem1.7}, we have $\rho(A(\phi,E_B^1))\le \rho(A(\phi,E_A^1))$. Thus we get \[
\sup_{\phi\in \Phi_{{\rm mod}B}}\{\rho(A(\phi,E_B^1))\}\le \sup_{\phi\in \Phi_{{\rm mod}B}}\{\rho(A(\phi,E_A^1))\le \sup_{\phi\in \Phi_{{\rm mod}A}}\{\rho(A(\phi,E_A^1))\}
\]where $\Phi_{{\rm mod}A}$ means the set of brick sets in ${\rm mod}A$, $\Phi_{{\rm mod}B}$ means the set of brick sets in ${\rm mod}B$. It follows that ${\rm fpd}_B (E^1)\le {\rm fpd}_A(E^1)$.
\end{proof}

\begin{lem}\cite[Theorem 2.13, Ch. IV]{ASS}
\label{xxthm2.3}
Let $A$ be an algebra, $\tau$ be the Auslander-Reiten translate of ${\rm mod}A$ and $M, N$ be two $A$-modules in ${\rm mod}A$. Then there exist isomorphisms\[
{\rm Ext}_A^1(M,N)\cong D\underline{{\rm Hom}}_A(\tau^{-1}N,M)\cong D\overline{{\rm Hom}}_A(N,\tau M)
\]that are functorial in both variables, where $\underline{{\rm Hom}}_A(M,N)={\rm Hom}_A(M,N)/\mathcal{P}(M,N),$ $\overline{{\rm Hom}}_A(M,N)={\rm Hom}_A(M,N)/\mathcal{I}(M,N)$, $\mathcal{P}(M,N)$ (respectively, $\mathcal{I}(M,N)$) is a subset of ${\rm Hom}_A(M,N)$ consisting of all homomorphisms that factor through a projective (respectively, injective) $A$-module.
\end{lem}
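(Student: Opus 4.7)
The plan is to establish the second isomorphism ${\rm Ext}^1_A(M,N) \cong D\overline{{\rm Hom}}_A(N,\tau M)$ via the Nakayama functor; the first isomorphism then follows from the mirror argument using a minimal injective copresentation of $N$ and the inverse Nakayama functor $\nu^{-} := {\rm Hom}_A(DA, -)$. Starting from a minimal projective presentation $P_1 \xrightarrow{p_1} P_0 \to M \to 0$ and recalling that ${\rm Tr}\, M$ is the cokernel of $p_1^{\ast} : {\rm Hom}_A(P_0, A) \to {\rm Hom}_A(P_1, A)$, I apply the exact contravariant functor $D = {\rm Hom}_{\Bbbk}(-,\Bbbk)$ to obtain the four-term exact sequence
$$0 \to \tau M \to \nu P_1 \to \nu P_0 \to \nu M \to 0$$
of left $A$-modules, where $\nu := D{\rm Hom}_A(-, A)$.

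The key homological input is the natural isomorphism ${\rm Hom}_A(N, \nu P) \cong D{\rm Hom}_A(P, N)$, valid for any projective $P$ and arbitrary $N$ (proved first for $P = A$ by direct inspection and extended to all projectives by additivity and retraction). Applying ${\rm Hom}_A(N, -)$ to the four-term sequence and substituting gives a left-exact sequence
$$0 \to {\rm Hom}_A(N, \tau M) \to D{\rm Hom}_A(P_1, N) \to D{\rm Hom}_A(P_0, N).$$
Separately, applying ${\rm Hom}_A(-, N)$ to the projective presentation of $M$ and then $D$ produces a parallel left-exact sequence whose leftmost term surjects onto $D{\rm Ext}^1_A(M, N)$ (because ${\rm Ext}^1_A(M,N)$ sits as a submodule of the cokernel of the map of ${\rm Hom}$-spaces, via the kernel-image decomposition of the presentation). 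Comparing the two sequences yields a natural surjection ${\rm Hom}_A(N, \tau M) \twoheadrightarrow D{\rm Ext}^1_A(M, N)$ that is functorial in both $M$ and $N$.

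The remaining — and most delicate — step is to identify the kernel of this surjection with $\mathcal{I}(N, \tau M)$, the subspace of morphisms factoring through an injective module. Here the main obstacle is conceptual rather than computational: one must show that the ``extra'' part of ${\rm Hom}_A(N, \tau M)$ not detected by ${\rm Ext}^1_A(M,N)$ consists precisely of morphisms that factor through an injective. The key observation is that $\nu P_1$ is injective, since $\nu$ sends projectives to injectives; a morphism $f : N \to \tau M$ lies in the kernel of our surjection exactly when the composite $N \xrightarrow{f} \tau M \hookrightarrow \nu P_1$ admits an extension compatible with the presentation, and a diagram chase inside $\nu P_1$ and $\nu P_0$ translates this condition into $f \in \mathcal{I}(N, \tau M)$. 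Verifying naturality in both variables completes the proof of ${\rm Ext}^1_A(M,N) \cong D\overline{{\rm Hom}}_A(N,\tau M)$, and the dual construction gives the first isomorphism.
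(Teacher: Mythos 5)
The paper itself does not prove this lemma: it is quoted verbatim from \cite[Theorem~2.13, Ch.~IV]{ASS} and used as a black box, so there is no in-paper proof to compare your attempt against. What you have written is a sketch of the standard textbook argument (essentially the one in ASS), and the skeleton is correct: the four-term sequence $0 \to \tau M \to \nu P_1 \to \nu P_0 \to \nu M \to 0$ obtained by applying $D$ to the transpose presentation, the key natural isomorphism ${\rm Hom}_A(N,\nu P)\cong D\,{\rm Hom}_A(P,N)$ for $P$ projective, and the comparison of the two resulting left-exact sequences to get a natural surjection ${\rm Hom}_A(N,\tau M)\twoheadrightarrow D\,{\rm Ext}^1_A(M,N)$ all check out. (Your parenthetical justification that ${\rm Ext}^1_A(M,N)$ sits inside ${\rm coker}(p_1^*)$ as ${\rm Hom}_A(\Omega M,N)/{\rm im}\,{\rm Hom}_A(P_0,N)$ is also right and is exactly what makes the dual map surjective.)

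The one genuine gap is the final step, which you flag as ``delicate'' and then dispatch with ``a diagram chase inside $\nu P_1$ and $\nu P_0$ translates this condition into $f\in\mathcal I(N,\tau M)$.'' That sentence is not an argument, and it is precisely where the theorem lives. Note the asymmetry: the inclusion $\mathcal I(N,\tau M)\subseteq\ker$ is cheap --- by naturality in $N$, a map factoring as $N\to I\to\tau M$ with $I$ injective sends the image to something that factors through $D\,{\rm Ext}^1_A(M,I)=0$. The converse inclusion $\ker\subseteq\mathcal I(N,\tau M)$ is the real work: one must show that if $f:N\to\tau M$ dies in $D\,{\rm Ext}^1_A(M,N)$ then $f$ factors through an injective, and the candidate injective ($\nu P_1$, or the injective envelope of $\tau M$ sitting inside it) does not obviously admit a retraction back onto $\tau M$, so ``extension compatible with the presentation'' has to be made precise and then unwound carefully; in ASS this is carried by its own preparatory lemmas. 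If you intend this as a complete proof rather than a pointer to the literature, you need to supply that argument; if you only intend it as an attribution (as the paper does), it would be cleaner to simply cite \cite{ASS} as the paper does and not gesture at a proof.
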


\begin{pro}
\label{xxcor2.4}
Let $A$ be an algebra, Then we have\[
{\rm fpd}_A (E^1)\le {\rm fpd}_A(\tau).
\]
\end{pro}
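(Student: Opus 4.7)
The plan is to compare the two adjacency matrices entrywise on every brick set, using Lemma \ref{xxthm2.3} as the bridge. Fix a brick set $\phi=\{X_1,\dots,X_n\}\in\Phi_{n,b}({\rm mod}A)$. Let $B=(b_{ij})$ be the adjacency matrix $A(\phi,E^1)$, so $b_{ij}=\dim{\rm Ext}_A^1(X_i,X_j)$, and let $C=(c_{ij})$ be the adjacency matrix $A(\phi,\tau)$, so $c_{ij}=\dim{\rm Hom}_A(X_i,\tau X_j)$ by the remark after Definition \ref{xxdef2.3}.

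First I would apply Lemma \ref{xxthm2.3} in the form ${\rm Ext}_A^1(M,N)\cong D\overline{{\rm Hom}}_A(N,\tau M)$ to obtain, for every pair $i,j$,
\[
b_{ij}=\dim{\rm Ext}_A^1(X_i,X_j)=\dim\overline{{\rm Hom}}_A(X_j,\tau X_i)\le \dim{\rm Hom}_A(X_j,\tau X_i)=c_{ji},
\]
where the inequality uses that $\overline{{\rm Hom}}_A(X_j,\tau X_i)$ is a quotient of ${\rm Hom}_A(X_j,\tau X_i)$ by the subspace $\mathcal{I}(X_j,\tau X_i)$. Hence $B\le C^{T}$ entrywise, with both matrices having nonnegative real entries.

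Next, since transposition preserves the spectrum, $\rho(C^{T})=\rho(C)$, and Lemma \ref{xxlem1.7}(2) yields $\rho(B)\le \rho(C^{T})=\rho(C)$, i.e.
\[
\rho(A(\phi,E^1))\le \rho(A(\phi,\tau))\le {\rm fpd}_A^n(\tau).
\]
Taking the supremum over all $\phi\in\Phi_{n,b}$ gives ${\rm fpd}_A^n(E^1)\le {\rm fpd}_A^n(\tau)$, and then taking the supremum over $n$ yields the desired inequality ${\rm fpd}_A(E^1)\le {\rm fpd}_A(\tau)$.

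There is essentially no obstacle: the whole argument is a one-line entrywise domination followed by the monotonicity of spectral radius. The only small point to be careful about is the index swap introduced by the duality in Lemma \ref{xxthm2.3}, which forces a transpose; this is harmless because $\rho(C)=\rho(C^{T})$. The second isomorphism $D\underline{{\rm Hom}}_A(\tau^{-1}N,M)$ could be used instead if one prefers to work with $\tau^{-1}$, but the $\overline{{\rm Hom}}$ form is more convenient here because the assignment defining ${\rm fpd}_A(\tau)$ already uses $\tau$ (not $\tau^{-1}$) in the second slot.
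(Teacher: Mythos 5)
Your proof is correct and follows essentially the same route as the paper: apply the Auslander--Reiten formula ${\rm Ext}_A^1(M,N)\cong D\overline{{\rm Hom}}_A(N,\tau M)$ to get an entrywise dimension bound, then invoke monotonicity of the spectral radius and pass to suprema. The one place you are more careful than the paper is in making the transpose explicit (the paper's entrywise inequality actually compares $A(\phi,E^1)$ with $A(\phi,\tau)^{T}$, not $A(\phi,\tau)$ itself), which is harmless since $\rho(C)=\rho(C^{T})$ but is worth spelling out as you did.
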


\begin{proof}
By Lemma \ref{xxthm2.3}, for any two $A$-modules $M,N$, we have \[{\rm dim}{\rm Ext}_A^1(M,N)={\rm dim} D\overline{{\rm Hom}}_A(N,\tau M)\le {\rm dimHom} {\rm Hom}_A(N,\tau M).\] It follows that\[
{\rm fpd}_A (E^1)=\sup_{\phi\in \Phi_b}\{\rho(A(\phi,E^1))\}\le \sup_{\phi\in \Phi_{b}}\{\rho(A(\phi,\tau))={\rm fpd}_A(\tau)
\]where $\Phi_{b}$ means the set of brick sets in ${\rm mod}A$.
\end{proof}

\begin{lem}\cite[Lemma 5.2, Ch. VIII]{ASS}
\label{xxlem2.5}
Let $A$ be an algebra, $\mathcal{I}$ be a two-sided ideal of A, and $B=A/\mathcal{I}$. If $M$ is a $B$-module, then the Auslander-Reiten translate $\tau_BM$ of $M$ in ${\rm mod}B$ is a submodule of the Auslander-Reiten translate $\tau_AM$ of $M$ in ${\rm mod}A$.
\end{lem}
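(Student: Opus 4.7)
The plan is to prove the inclusion via the Auslander transpose, exploiting the identification $\tau M \cong D\operatorname{Tr} M$ where $D = \operatorname{Hom}_{\Bbbk}(-,\Bbbk)$. I would begin by choosing a minimal projective presentation $P_1 \xrightarrow{p_1} P_0 \to M \to 0$ of $M$ in $\operatorname{mod} A$. Since $\mathcal{I} M = 0$, the map $P_0 \to M$ factors through $\bar{P}_0 := P_0/\mathcal{I} P_0$, and iterating this observation produces a projective presentation $\bar{P}_1 \xrightarrow{\bar{p}_1} \bar{P}_0 \to M \to 0$ in $\operatorname{mod} B$ obtained by applying $-\otimes_A B$. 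Under the admissible-ideal hypothesis $\mathcal{I} \subseteq \operatorname{rad}(A)$ this presentation is again minimal, because $\operatorname{top}_B(\bar{P}_i) = P_i/\operatorname{rad}(A) P_i = \operatorname{top}_A(P_i)$ matches the top of $M$ (respectively of $\ker(\bar{p}_0)$) in the same way the original presentation does.

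Next I would apply $\operatorname{Hom}_A(P_i,-)$ to the short exact sequence $0 \to \mathcal{I} \to A \to B \to 0$ of right $A$-modules. Projectivity of $P_i$ yields surjections $\operatorname{Hom}_A(P_i, A) \twoheadrightarrow \operatorname{Hom}_A(P_i, B)$, and by adjunction $\operatorname{Hom}_A(P_i, B) \cong \operatorname{Hom}_B(\bar{P}_i, B)$. This fits into a commutative diagram
\begin{equation*}
\begin{array}{ccccccc}
\operatorname{Hom}_A(P_0,A) & \to & \operatorname{Hom}_A(P_1,A) & \to & \operatorname{Tr}_A M & \to & 0 \\
\downarrow & & \downarrow & & \downarrow & & \\
\operatorname{Hom}_B(\bar{P}_0,B) & \to & \operatorname{Hom}_B(\bar{P}_1,B) & \to & \operatorname{Tr}_B M & \to & 0
\end{array}
\end{equation*}
whose first two vertical maps are surjective. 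A diagram chase (or direct application of the snake lemma) then produces a surjection $\operatorname{Tr}_A M \twoheadrightarrow \operatorname{Tr}_B M$ of right modules. Applying the exact duality $D$ reverses the arrow and gives the desired inclusion $\tau_B M = D\operatorname{Tr}_B M \hookrightarrow D\operatorname{Tr}_A M = \tau_A M$.

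The principal technical difficulty lies in the minimality issue: $\operatorname{Tr}$ is only well-defined on objects of the stable category, so a non-minimal projective presentation introduces spurious projective summands and a corresponding ambiguity (by injective summands) in $\tau$. In the admissible setting $\mathcal{I} \subseteq \operatorname{rad}(A)$ relevant to representation-directed and canonical algebras this is harmless, as argued in the first paragraph, but if one wanted to remove that hypothesis one would need to separately account for discrepancies between $\bar{P}_i$ and the true projective cover of $M$ (respectively of $\Omega_B M$) in $\operatorname{mod} B$, and verify that these discrepancies only contribute projective summands which vanish under the stable-module identification used to define $\tau_B M$ as a submodule of $\tau_A M$.
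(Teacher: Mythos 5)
Your overall strategy --- reduce a minimal projective $A$-presentation of $M$ modulo $\mathcal I$, pass to transposes via the commutative square of $\operatorname{Hom}(-,\text{ring})$'s with surjective vertical maps, and dualize --- is exactly the argument in Assem--Simson--Skowro\'nski, which the paper cites without reproducing. However, the claim in your first paragraph, that admissibility $\mathcal I\subseteq\operatorname{rad}A$ forces $\bar{P}_1\to\bar{P}_0\to M\to 0$ to be a \emph{minimal} projective presentation in $\operatorname{mod}B$, is false. It is true that $\operatorname{top}_B\bar{P}_i=\operatorname{top}_A P_i$, so $\bar{p}_0\colon\bar{P}_0\to M$ is a projective cover; but this does \emph{not} make $\bar{P}_1\to\ker\bar{p}_0$ a projective cover, because $\ker\bar{p}_0$ is the proper quotient $\ker p_0/\mathcal I P_0$ of $\ker p_0$, and its top can genuinely drop. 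For a concrete counterexample take $A=\Bbbk Q$ with $Q\colon 1\to 2$, let $\mathcal I=\operatorname{rad}A$ (so that $B\cong\Bbbk\times\Bbbk$), and $M=S(1)$. The minimal $A$-presentation is $P(2)\to P(1)\to S(1)\to 0$ with $\ker p_0=S(2)$; reducing modulo $\mathcal I$ gives $\bar{P}_1=S(2)$, $\bar{P}_0=S(1)=M$, and $\ker\bar{p}_0=0$, so the reduced presentation $S(2)\xrightarrow{\,0\,}S(1)\to S(1)\to 0$ carries a spurious summand $S(2)$ and is not minimal.

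Fortunately your argument survives without this claim, and that is how the cited source phrases it: a possibly non-minimal projective presentation computes the transpose only up to a projective direct summand, so the cokernel of the bottom row of your diagram is $\operatorname{Tr}_B M\oplus Q$ with $Q$ projective as a right $B$-module. The surjection you extract from the diagram therefore lands on $\operatorname{Tr}_B M\oplus Q$, and composing with the projection onto $\operatorname{Tr}_B M$ still yields an epimorphism $\operatorname{Tr}_A M\twoheadrightarrow\operatorname{Tr}_B M$; applying $D$ then gives the desired monomorphism $\tau_B M\hookrightarrow\tau_A M$. In short, the ``principal technical difficulty'' you flag at the end is not actually cured by admissibility (and invoking it would needlessly restrict the lemma, which is stated for an arbitrary two-sided ideal), but it also never needs to be cured: the direction of the inclusion you are after happily tolerates the extra projective summand. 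Simply delete the minimality claim and run the rest of the argument as written.
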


\begin{pro}
Let $A$ be an algebra, $\mathcal{I}$ be a two-sided ideal of A, and $B=A/\mathcal{I}$.  Then we have\[
{\rm fpd}_B (\tau_B)\le {\rm fpd}_A(\tau_A).
\]
\end{pro}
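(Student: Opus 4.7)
The plan is to mirror the proof of Proposition \ref{prop1}, replacing the easy observation about $\mathrm{Ext}^1$ subspaces with the submodule relation supplied by Lemma \ref{xxlem2.5}. Concretely, for any pair $M,N$ of $B$-modules, I will produce a $\Bbbk$-linear injection
\[
{\rm Hom}_B(M,\tau_B N)\hookrightarrow {\rm Hom}_A(M,\tau_A N),
\]
and then combine it with the fact (established inside the proof of Proposition \ref{prop1}) that ${\rm mod}\,B$ is a full subcategory of ${\rm mod}\,A$, so that every brick set of ${\rm mod}\,B$ is automatically a brick set of ${\rm mod}\,A$.

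First I would set up the Hom comparison. Because ${\rm mod}\,B$ is a full subcategory of ${\rm mod}\,A$, the identity ${\rm Hom}_B(M,X)={\rm Hom}_A(M,X)$ holds whenever $X$ is a $B$-module. By Lemma \ref{xxlem2.5}, $\tau_B N$ is a submodule of $\tau_A N$, and the inclusion is $A$-linear; since ${\rm Hom}_A(M,-)$ is left exact, it induces a $\Bbbk$-linear monomorphism ${\rm Hom}_A(M,\tau_B N)\hookrightarrow {\rm Hom}_A(M,\tau_A N)$. Chaining this with the full-subcategory identity yields the desired injection, hence
\[
\dim{\rm Hom}_B(M,\tau_B N)\leq \dim{\rm Hom}_A(M,\tau_A N).
\]

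Next I would pass to adjacency matrices. Let $\phi=\{X_1,\dots,X_n\}$ be a brick set in ${\rm mod}\,B$. Since Hom-spaces between $B$-modules coincide when computed in ${\rm mod}\,B$ or in ${\rm mod}\,A$, $\phi$ is again a brick set in ${\rm mod}\,A$. The previous inequality applied entrywise gives $0\le (A(\phi,\tau_B))_{ij}\le (A(\phi,\tau_A))_{ij}$ for all $i,j$, so by Lemma \ref{xxlem1.7}(2),
\[
\rho(A(\phi,\tau_B))\leq \rho(A(\phi,\tau_A)).
\]
Taking the supremum first over brick sets of ${\rm mod}\,B$, and then using that these form a subset of the brick sets of ${\rm mod}\,A$, produces
\[
{\rm fpd}_B(\tau_B)=\sup_{\phi\in \Phi^B_b}\rho(A(\phi,\tau_B))\leq \sup_{\phi\in \Phi^A_b}\rho(A(\phi,\tau_A))={\rm fpd}_A(\tau_A),
\]
which is the claim.

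The only delicate point is the Hom-injectivity step: one must use both that the submodule inclusion $\tau_B N\hookrightarrow \tau_A N$ is genuinely $A$-linear (supplied by Lemma \ref{xxlem2.5}), and that the source $M$ is a $B$-module so that no extra $A$-linear maps appear on the left side. Once that is in place, the argument is purely a transport of Proposition \ref{prop1}'s template to the $\tau$-assignment, and no further subtlety arises.
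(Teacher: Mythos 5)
Your proposal is correct and follows essentially the same route as the paper: both proofs invoke Lemma \ref{xxlem2.5} to get the submodule inclusion $\tau_B N\hookrightarrow\tau_A N$, apply left exactness of $\mathrm{Hom}_A(M,-)$ to obtain the Hom-dimension inequality, and then pass to spectral radii and suprema over brick sets. You are only slightly more explicit than the paper in spelling out the identification $\mathrm{Hom}_B(M,\tau_B N)=\mathrm{Hom}_A(M,\tau_B N)$ and in citing Lemma \ref{xxlem1.7}(2) for the matrix comparison.
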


\begin{proof}
For two $B$-modules $M,N$, by Lemma \ref{xxlem2.5}, there is a monomorphism $i: \tau_BN\rightarrow \tau_AN$. Since the functor ${\rm Hom}_A(M,-)$ is left exact, the induced homomorphism ${\rm Hom} (M,i): {\rm Hom}_A(M,\tau_BN)\rightarrow {\rm Hom}_A(M,\tau_AN)$ is also a monomorphism. So we have \[{\rm dimHom}_B(M,\tau_BN)={\rm dimHom}_A(M,\tau_BN)\le{\rm dimHom}_A(M,\tau_AN).\] It follows that\[
{\rm fpd}_B (\tau_B)=\sup_{\phi\in \Phi_{{\rm mod}B}}\{\rho(A(\phi,\tau_B))\}\le \sup_{\phi\in \Phi_{{\rm mod}A}}\{\rho(A(\phi,\tau_A))={\rm fpd}_{A}(\tau_{A})
\]where $\Phi_{{\rm mod}A}$ means the set of brick sets in ${\rm mod}A$, $\Phi_{{\rm mod}B}$ means the set of brick sets in ${\rm mod}B$.
\end{proof}

\section{Fp-dimension of representation-directed algebras}
Let $A$ be an algebra. Recall that a {\it path} in ${\rm mod}A$ is a sequence\[
M_0\xrightarrow{f_1}M_1\xrightarrow{f_2}M_2\rightarrow\cdots\rightarrow M_{t-1}\xrightarrow{f_t}M_t,
\] where $t\ge1$, $M_0,M_1,\cdots,M_t$ are indecomposable $A$-modules and $f_1,\cdots,f_t$ are non-zero non-isomorphisms homomorphisms. A path in ${\rm mod}A$ is called a {\it cycle} if its source module $M_0$ is isomorphic to its target module $M_t$. An indecomposable $A$-module that lies on no cycle in ${\rm mod}A$ is called a {\it directing module}. An algebra is called {\it representation-directed} if every indecomposable $A$-module is directing.

\begin{lem}
\label{lem3.1}
Let $A$ be a representation-directed algebra, $\{M_1,\cdots,M_n\}$ be a set of indecomposable $A$-modules where $n$ is a positive integer. Then there exists a permutation $\sigma$ of $\{1,\cdots,n\}$ such that there is no path from $M_{i}$ to $M_{j}$ for $\sigma(i)\le\sigma(j)$.
\end{lem}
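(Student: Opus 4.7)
The plan is to recognize the conclusion as a topological-sort statement for the natural acyclic relation induced by paths in ${\rm mod}A$, with representation-directedness supplying exactly the acyclicity needed.

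First I would define a relation $\prec$ on indecomposable $A$-modules by $M \prec N$ iff there is a path from $M$ to $N$ in ${\rm mod}A$. Concatenation of paths makes $\prec$ transitive, and the representation-directed hypothesis (no indecomposable lies on a cycle) gives irreflexivity, i.e., $M \not\prec M$ for every indecomposable $M$; together these imply antisymmetry, since $M \prec N \prec M$ would concatenate to a cycle through $M$. Hence $\prec$ restricts to a finite strict partial order on $\{M_1,\dots,M_n\}$.

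Next I would induct on $n$. The key step in the induction is that this finite poset admits a $\prec$-minimal element $M_{i_0}$ (no $M_j$ in the set satisfies $M_j\prec M_{i_0}$): otherwise iteratively choosing $\prec$-smaller elements of the finite set forces a repetition by the pigeonhole principle, hence a cycle in ${\rm mod}A$, contradicting the hypothesis. I would then set $\sigma(i_0) := n$, apply the inductive hypothesis on $\{M_i : i \neq i_0\}$ using the values $1,\dots,n-1$, and verify the four cases: for $i,j\neq i_0$ the claim is the inductive hypothesis; for $i\neq i_0, j=i_0$ one has $\sigma(i)\le n-1<n=\sigma(j)$ and the absence of a path $M_i\to M_{i_0}$ holds by minimality of $M_{i_0}$; for $i=j=i_0$ the needed absence of a path $M_{i_0}\to M_{i_0}$ is again no-cycles; and the case $i=i_0,j\neq i_0$ is vacuous since $\sigma(i_0)>\sigma(j)$.

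I do not expect any genuine obstacle, since the lemma is essentially the standard fact that every finite strict partial order admits a linear extension, and representation-directedness is used only to upgrade the transitive path-relation to an acyclic one. The only bookkeeping subtlety is orientation: picking $\prec$-minimal (rather than maximal) elements and assigning them the \emph{largest} available $\sigma$-value is what aligns the construction with the direction ``$\sigma(i) \le \sigma(j) \Rightarrow$ no path from $M_i$ to $M_j$'' required by the statement.
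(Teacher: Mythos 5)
Your proof is correct and follows essentially the same route as the paper's: both argue by induction on $n$ and produce the needed extremal module by observing that, in a representation-directed algebra, a finite set with no extremal element would force a repeated index along a concatenated chain of paths and hence a cycle. The only (immaterial) difference is orientation — you select a module with no \emph{incoming} paths and give it the largest $\sigma$-value, whereas the paper selects one with no \emph{outgoing} paths and gives it $\sigma$-value $1$ — together with your cleaner framing of the lemma as a linear extension of the strict partial order induced by paths.
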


\begin{proof}
Using induction on $n$. If $n=1$, there is only one element $M_1$. The conclusion is tenable.

Now assume the conclusion holds for $n=k-1$. When $n=k$, there exists $i_0$, such that there is no path from $M_{i_0}$ to the other modules. Otherwise, for each $i$, there exists $j$ and a path from $M_i$ to $M_j$, then we can get a longer path\[
M_{i_1}\rightarrow \cdots\rightarrow M_{i_2}\rightarrow\cdots\rightarrow M_{i_3}\rightarrow \cdots
\]Since the set is finite, we can find $i_s=i_t$. And then we get a cycle which contradicts the assumption $A$ is a representation-directed algebra. Define $\sigma_1$ permuting $i_0$ with 1. By induction, we have $\sigma_2$ such that there is no path from $M_{\sigma_1(i)}$ to $M_{\sigma_1(j)}$ for $1<\sigma_2\sigma_1(i)\le\sigma_2\sigma_1(j)$. Let $\sigma=\sigma_2 \sigma_1$, then we get what we need.
\end{proof}

\begin{thm}
\label{thm1}
Let $A$ be a representation-directed algebra. Then we have \[{\rm fpd}_{A}(\tau)=0.\]
\end{thm}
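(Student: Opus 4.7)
My plan is to show that for any brick set $\phi=\{X_1,\dots,X_n\}$ in ${\rm mod}A$, the adjacency matrix $A(\phi,\tau)$ can be permuted to a strictly lower triangular form, which forces spectral radius $0$.

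The key intermediate claim is the following path-detection lemma: if $\mathrm{Hom}_A(X_i,\tau X_j)\neq 0$ (with $X_i,X_j$ indecomposable), then there is a path from $X_i$ to $X_j$ in ${\rm mod}A$. To prove this, I would first dispose of the projective case ($X_j$ projective forces $\tau X_j=0$, so the entry vanishes). Otherwise, consider the almost split sequence
\[
0\longrightarrow \tau X_j \longrightarrow \bigoplus_k M_k^{n_k} \longrightarrow X_j \longrightarrow 0,
\]
with each $M_k$ indecomposable and the two outer maps built from irreducible morphisms (in particular nonzero non-isomorphisms). If $X_i\cong \tau X_j$, then composing $\mathrm{id}$ with an injection into some $M_k$ and then with the projection to $X_j$ produces a path $X_i\to M_k\to X_j$. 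If $X_i\not\cong \tau X_j$, then any nonzero $f:X_i\to\tau X_j$ is automatically a non-isomorphism (two non-isomorphic indecomposables admit no iso between them), and prepending $f$ to the previous path gives a path $X_i\to\tau X_j\to M_k\to X_j$.

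With this in hand, I would invoke Lemma~\ref{lem3.1} to pick a permutation $\sigma$ of $\{1,\dots,n\}$ such that no path from $X_i$ to $X_j$ exists whenever $\sigma(i)\leq\sigma(j)$. By the contrapositive of the path-detection claim, this means $\dim\mathrm{Hom}_A(X_i,\tau X_j)=0$ for all such pairs. Thus the permuted matrix $P^{-1}A(\phi,\tau)P$ is strictly lower triangular (zeros on and above the diagonal), so all its eigenvalues are $0$ and $\rho(A(\phi,\tau))=0$. Since $\phi$ was an arbitrary brick set, taking the supremum in Definition~\ref{xxdef2.3} yields ${\rm fpd}_A(\tau)=0$.

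The main technical obstacle is the path-detection claim, specifically justifying in the case $X_i\cong\tau X_j$ that the composite $X_i\to M_k\to X_j$ genuinely qualifies as a path (all arrows nonzero and non-isomorphisms). This rests on the standard fact that in an almost split sequence the components to and from each indecomposable summand $M_k$ are irreducible morphisms, which are by definition neither zero, split mono, nor split epi, hence non-isomorphisms between indecomposables. Everything else is bookkeeping with the permutation, and the reduction to strictly triangular form is immediate once the zero pattern is established.
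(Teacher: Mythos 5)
Your proposal is correct and follows essentially the same route as the paper: invoke Lemma~\ref{lem3.1} to find a permutation rendering the adjacency matrix strictly triangular, and use the almost split sequence ending at $X_j$ to convert a nonzero ${\rm Hom}_A(X_i,\tau X_j)$ into a path from $X_i$ to $X_j$. In fact you are slightly more careful than the paper in the path-detection step: the paper asserts that ${\rm Hom}_A(M_1,\tau M_2)\neq 0$ yields a nonzero \emph{non-isomorphism} $M_1\to\tau M_2$, which is literally false when $M_1\cong\tau M_2$ (since $M_1$ is a brick, every nonzero map is then an isomorphism); your case split sidesteps this by composing through the irreducible map into a middle term $M_k$, so every arrow in the resulting path is genuinely a nonzero non-isomorphism. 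You also explicitly dispose of the case $X_j$ projective, which the paper leaves implicit.
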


\begin{proof}
For any two bricks $M_1,M_2$, if ${\rm Hom}_A(M_1,\tau M_2)\ne 0$, then there exists a non-zero non-isomorphism homomorphism $f:M_1\rightarrow \tau M_2$. Note that Auslander-Reiten sequence $\tau M_2\rightarrow \oplus_{i=1}^k N_i\rightarrow M_2$ gives a path $\tau M_2\rightarrow N_1\rightarrow M_2$, so we get a path $M_1\rightarrow\tau M_2\rightarrow N_1\rightarrow M_2$.

Assume $\phi=\{M_1,\cdots,M_n\}$ is a brick set. By Lemma \ref{lem3.1}, we have a permutation $\sigma$ of $\{1,\cdots,n\}$ such that for $\sigma(i)\le\sigma(j)$ there is no path from $M_{i}$ to $M_{j}$, thus ${\rm Hom}_A(M_i,\tau M_j)=0$. Therefore, the adjacency matrix of $\phi$ is a strictly upper triangular matrix in some order, so the spectral radius of the matrix is zero. It follows that ${\rm fpd}_{A}(\tau)=0$.
\end{proof}

\begin{cor}
\label{cor1}
Let $A$ be a representation-directed algebra. Then we have \[{\rm fpd}_{A}(E^1)=0.\]
\end{cor}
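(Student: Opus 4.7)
The plan is to derive this immediately from the two tools already assembled in the paper: Theorem \ref{thm1}, which asserts ${\rm fpd}_A(\tau)=0$ for a representation-directed algebra, and Proposition \ref{xxcor2.4}, which gives the general inequality ${\rm fpd}_A(E^1)\le {\rm fpd}_A(\tau)$. Chaining these two yields ${\rm fpd}_A(E^1)\le 0$.

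To conclude equality, I would note that the fp-dimension is by definition a supremum of spectral radii $\rho(A(\phi,E^1))$ of matrices whose entries are the nonnegative integers $\dim{\rm Ext}^1_A(X_i,X_j)$. Such spectral radii are always nonnegative real numbers (or $+\infty$), so ${\rm fpd}_A(E^1)\ge 0$ automatically. Combined with the upper bound of $0$ obtained above, this forces ${\rm fpd}_A(E^1)=0$.

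I do not expect any real obstacle here, since the content of the corollary has been entirely absorbed into Theorem \ref{thm1} and Proposition \ref{xxcor2.4}. The only small sanity check to include in the write-up is that $\Phi_b$ for ${\rm mod}A$ is nonempty (so that the supremum is over a nonempty family and one is not taking $\sup\emptyset$); this is clear because any simple $A$-module is a brick. Thus the proof can be stated in one or two lines, simply as the combination of Proposition \ref{xxcor2.4} and Theorem \ref{thm1}.
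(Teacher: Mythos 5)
Your proposal matches the paper's own proof exactly: Corollary \ref{cor1} there is proved in one line by combining Proposition \ref{xxcor2.4} with Theorem \ref{thm1} to get $0\le {\rm fpd}_A(E^1)\le {\rm fpd}_A(\tau)=0$. Your added remark that $\Phi_b$ is nonempty (so the supremum is well-behaved) is a harmless extra sanity check not spelled out in the paper.
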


\begin{proof}
By Proposition \ref{xxcor2.4} and Theorem \ref{thm1}, we have $0\le {\rm fpd}_{A} (E^1)\le {\rm fpd}_{A}(\tau)=0$. Thus ${\rm fpd}_{A}(E^1)=0.$
\end{proof}

Let $\mathfrak{A}$ be an abelian category with enough projective objects or enough injective objects. For two objects $M,N$ in $\mathfrak{A}$, recall from  \cite{O} that the zero element in $\mathrm{Ext}_{\mathfrak{A}}^m(N,M)$ can be represented as \[
[0]=\begin{cases}
\begin{tikzpicture}
\node (1) at (0,0) {$0$};
\node (2) at (1.5,0) {$M$};
\node (3) at (3,0){$M$};
\node (5) at (4.5,0){$N$};
\node (6) at (6,0) {$N$};
\node (7) at (7.5,0) {$0.$};
\draw[->] (1) --node[above ]{} (2);
\draw[->] (2) --node[above ]{Id} (3);
\draw[->] (3) --node[above ]{0} (5);
\draw[->] (5) --node[above ]{Id} (6);
\draw[->] (6) --node[above ]{} (7);
\node (1) at (9,0) {$(m=2)$};
\end{tikzpicture}\\
\begin{tikzpicture}
\node (1) at (0,0) {$0$};
\node (2) at (1.5,0) {$M$};
\node (3) at (3,0){$M$};
\node (4) at (4.5,0){$0$};
\node (5) at (6,0){$\cdots$};
\draw[->] (1) --node[above ]{} (2);
\draw[->] (2) --node[above ]{Id} (3);
\draw[->] (3) --node[above ]{} (4);
\draw[->] (4) --node[above ]{} (5);
\end{tikzpicture}\\
\begin{tikzpicture}
\node (1) at (-1,0) {};
\node (1) at (0,0) {};
\node (2) at (1.5,0) {$0$};
\node (3) at (3,0){$N$};
\node (4) at (4.5,0){$N$};
\node (5) at (6,0){$0.$};
\draw[->] (1) --node[above ]{} (2);
\draw[->] (2) --node[above ]{} (3);
\draw[->] (3) --node[above ]{Id} (4);
\draw[->] (4) --node[above ]{} (5);
\node (1) at (8,0) {$(m\ge3)$};
\end{tikzpicture}
\end{cases}
\]

We have the following property.

\begin{lem}
\label{lem3.4}
If there exists a positive integer $m\ge2$ such that ${\rm Ext}_\mathfrak{A}^m(N,M)\ne0$, then we can find a path from $M$ to $N$.
\end{lem}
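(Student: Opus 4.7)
The plan is to represent the nonzero class in ${\rm Ext}^m_{\mathfrak{A}}(N,M)$ by an $m$-fold extension and extract a path from the resulting sequence of morphisms by passing to indecomposable summands. First I would pick a representative
\[
\eta:\ 0\to M\to X_1\to X_2\to\cdots\to X_m\to N\to 0
\]
of a nonzero class, set $L_0=M$, $L_m=N$ and $L_i={\rm Im}(X_i\to X_{i+1})$ for $1\le i\le m-1$, and split $\eta$ into $m$ short exact sequences $\xi_i:\ 0\to L_{i-1}\to X_i\to L_i\to 0$ whose Yoneda product represents $[\eta]$. A map-of-complexes comparison with the trivial $m$-extension displayed in the excerpt shows that if any single $\xi_i$ were split then $[\eta]$ would be equivalent to the trivial extension; consequently every $\xi_i$ is non-split.

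The technical core of the argument is an auxiliary observation: for a non-split short exact sequence $0\to A\to B\to C\to 0$ with $A$ and $C$ indecomposable and nonzero, there exists an indecomposable summand $B_j$ of $B$ such that both $A\to B_j$ and $B_j\to C$ are nonzero. The proof is a partition argument: if no such $B_j$ exists, the indecomposable summands of $B$ split into those receiving a nonzero map from $A$ and those mapping nonzero to $C$, and the exactness condition ${\rm Im}(A\to B)={\rm Ker}(B\to C)$ then forces $A$ to embed as a direct summand of $B$, splitting the sequence. Moreover, since $A\to B_j\to C$ equals zero by exactness, at least one of the two morphisms is not an isomorphism; thus the two-arrow chain $A\to B_j\to C$ collapses to a path of length one or two whose arrows are nonzero non-isomorphisms.

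I would then finish by induction on $m$. For $m=2$ the single intermediate module $L=L_1$ is decomposed as $L=\bigoplus_k L^{(k)}$; using the direct-sum decomposition $\bigoplus_k {\rm Ext}^1(L^{(k)},M)\cong {\rm Ext}^1(L,M)$ and the analogous decomposition on the other side, I pick an indecomposable summand $L^{(k_0)}$ for which both the pullback of $\xi_1$ and the pushout of $\xi_2$ stay nonzero; such a summand exists because the Yoneda product expands diagonally along this decomposition and must contain at least one nonzero term. Applying the auxiliary observation to each of the two refined non-split sequences and concatenating gives a path $M\to\cdots\to L^{(k_0)}\to\cdots\to N$. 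For $m\ge 3$ I would factor $[\eta]=\xi'\cdot\xi_1$ with $\xi_1\in {\rm Ext}^1(L_1,M)$ and $\xi'\in {\rm Ext}^{m-1}(N,L_1)$, decompose $L_1$ into indecomposable summands and isolate one for which the corresponding Yoneda factor is nonzero, and combine the path obtained from $\xi_{1,k}$ via the auxiliary observation with the inductive path produced by $\xi'_k$.

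The main obstacle I expect is this coordinated selection of indecomposable summands keeping the Yoneda product nonzero at every stage; handling any unavoidable isomorphism links in the chain by a final shortening step (replacing $X\xrightarrow{\sim} Y\to Z$ by the single composite) is a secondary technicality.
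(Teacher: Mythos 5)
Your proposal follows essentially the same route as the paper: induct on $m$, decompose an intermediate module into indecomposable summands, show by a partition argument that some summand carries a nonzero ${\rm Ext}^1$ to $M$ and from $N$, and then pass to the ${\rm Ext}^1$ level to produce the path. The paper works from the $N$-end (decomposing $\ker(L_m\to N)$) and proves the key selection step by a diagram chase showing the class would otherwise be trivial; your Yoneda factorization $[\eta]=\xi'\cdot\xi_1$ together with the splitting ${\rm Ext}^1(L_1,M)=\bigoplus_k{\rm Ext}^1(L_1^{(k)},M)$ makes the same point more cleanly, since bilinearity and the diagonal expansion immediately force some $k_0$ with both factors nonzero. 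Your explicit "auxiliary observation" is also a useful spelling-out of a step the paper leaves implicit when it abbreviates the conclusion to "we get a path $M\to K_i\to N$."

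There is one local slip in the auxiliary observation. You assert that "$A\to B_j\to C$ equals zero by exactness," but exactness only forces the \emph{sum} $\sum_j(A\to B_j\to C)=0$; an individual diagonal composite $A\to B_j\to C$ need not vanish. So the claim that at least one of the two arrows is a non-isomorphism needs a different justification, which is in fact stronger and cleaner: if $A\to B_j$ were an isomorphism then the composite $B\twoheadrightarrow B_j\xrightarrow{\sim}A$ would retract $A\to B$ and split the sequence, and dually if $B_j\to C$ were an isomorphism. Since the sequence is non-split, \emph{neither} arrow can be an isomorphism, so $A\to B_j\to C$ is already a path of length two and no "shortening step" is ever needed. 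Also, when you form the partition, note that there may be summands $B_j$ with both $A\to B_j=0$ and $B_j\to C=0$; assigning these to the class of summands receiving zero from $A$ still gives ${\rm Im}(A\to B)=\bigoplus_{A\to B_j\ne0}B_j$ and hence the splitting, so the contradiction goes through.
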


\begin{proof}
We use induction on $m$. If $m=2$, ${\rm Ext}^2(N,M)\ne0$ means there exists a non-zero exact sequence
\begin{equation}
\label{E1.1.2}
0\rightarrow M\xrightarrow{f} L_1\xrightarrow{g} L_2\xrightarrow{h} N\rightarrow0.
\end{equation}
 Let $K=\bigoplus_{i=1}^k K_i$ be the kernel of $L_2\rightarrow N$, where each $K_i$ is indecomposable. We can conclude that there is a $K_i$ such that ${\rm Ext}^{1}(K_i,M)\ne0$ and ${\rm Ext}^{1}(N,K_i)\ne0$. Otherwise, ${\rm Ext}^{1}(K_j,M)=0$ or ${\rm Ext}^{1}(N,K_j)=0$ for each $j=1,2,\cdots, k$. Without loss of generality, we assume that ${\rm Ext}^{1}(\bigoplus_{i=1}^r K_i,M)=0$ and ${\rm Ext}^{1}(N,\bigoplus_{i=r+1}^k K_i)=0$. Denote $\bigoplus_{i=1}^r K_i$ by $K_{\underline{r}}$ and $\bigoplus_{i=r+1}^k K_i$ by $K_{\overline{r}}$. Then there is a direct summand $L'_1$ of $L_1$ and isomorphism $\varphi:L_1\rightarrow L'_1 \oplus K_{\overline{r}}$ satisfying the following commutative diagram\[
 \begin{tikzpicture}
\node (1) at (0,0) {$0$};
\node (2) at (2,0) {$M$};
\node (3) at (4,0){$L_1$};
\node (6) at (6,0) {$K_{\underline{r}}\oplus K_{\overline{r}}$};
\node (7) at (8,0) {$0$};
\draw[->] (1) --node[above ]{} (2);
\draw[->] (2) --node[above ]{} (3);
\draw[->] (3) --node[above ]{} (6);
\draw[->] (6) --node[above ]{} (7);
\node (11) at (0,-2) {$0$};
\node (12) at (2,-2) {$M$};
\node (13) at (4,-2){$L'_1 \oplus K_{\overline{r}}$};
\node (16) at (6,-2) {$K_{\underline{r}}\oplus K_{\overline{r}}$};
\node (17) at (8,-2) {$0$};
\draw[->] (11) --node[above ]{} (12);
\draw[->] (12) --node[above ]{$\begin{pmatrix}
   	 f' \\
     0
   	\end{pmatrix}$} (13);
\draw[->] (13) --node[above ]{$\begin{pmatrix}
   	 g_1& \\
     &1
   	\end{pmatrix}$} (16);
\draw[->] (16) --node[above ]{} (17);
\draw[->] (3) --node[left ]{$\varphi$} (13);
\draw[-] (2,-.3) --node[above ]{} (2,-1.7);
\draw[-] (2.1,-.3) --node[above ]{} (2.1,-1.7);
\draw[-] (6,-.3) --node[above ]{} (6,-1.7);
\draw[-] (6.1,-.3) --node[above ]{} (6.1,-1.7);
\end{tikzpicture}\]
 Similarly there is a direct summand $L'_2$ of $L_2$ and isomorphism $\psi:L_2\rightarrow  K_{\underline{r}}\oplus L'_2$ satisfying the following commutative diagram
 \[
 \begin{tikzpicture}
\node (1) at (0,0) {$0$};
\node (2) at (2,0) {$K_{\underline{r}}\oplus K_{\overline{r}}$};
\node (3) at (4,0){$L_2$};
\node (6) at (6,0) {$N$};
\node (7) at (8,0) {$0$};
\draw[->] (1) --node[above ]{} (2);
\draw[->] (2) --node[above ]{} (3);
\draw[->] (3) --node[above ]{} (6);
\draw[->] (6) --node[above ]{} (7);
\node (11) at (0,-2) {$0$};
\node (12) at (2,-2) {$K_{\underline{r}}\oplus K_{\overline{r}}$};
\node (13) at (4,-2){$K_{\underline{r}}\oplus K_{\overline{r}}$};
\node (16) at (6,-2) {$N$};
\node (17) at (8,-2) {$0$};
\draw[->] (11) --node[above ]{} (12);
\draw[->] (12) --node[above ]{$\begin{pmatrix}
   	 1 \\
     &g_2
   	\end{pmatrix}$} (13);
\draw[->] (13) --node[above ]{$\begin{pmatrix}
   	 0 &
     h'
   	\end{pmatrix}$} (16);
\draw[->] (16) --node[above ]{} (17);
\draw[->] (3) --node[right]{$\psi$} (13);
\draw[-] (2,-.3) --node[above ]{} (2,-1.7);
\draw[-] (2.1,-.3) --node[above ]{} (2.1,-1.7);
\draw[-] (6,-.3) --node[above ]{} (6,-1.7);
\draw[-] (6.1,-.3) --node[above ]{} (6.1,-1.7);
\end{tikzpicture}\]
Thus (\ref{E1.1.2}) is just the following exact sequence in the sense of isomorphism
\[
0\rightarrow M\xrightarrow{\begin{pmatrix}
   	 f' \\
     0
   	\end{pmatrix}} L'_1 \oplus K_{\overline{r}}\xrightarrow{\begin{pmatrix}
   	 g_1 \\
     &g_2
   	\end{pmatrix}} K_{\underline{r}}\oplus L'_2\xrightarrow{\begin{pmatrix}
   	 0 &
     h'
   	\end{pmatrix}} N\rightarrow0
\]Then we have commutative diagrams
\[
 \begin{tikzpicture}
\node (1) at (0,0) {$0$};
\node (2) at (2,0) {$M$};
\node (3) at (4,0){$M \oplus K_{\overline{r}}$};
\node (4) at (6,0){$K_{\underline{r}}\oplus L'_2$};
\node (6) at (8,0) {$N$};
\node (7) at (10,0) {$0$};
\draw[->] (1) --node[above ]{} (2);
\draw[->] (2) --node[above ]{$\begin{pmatrix}
   	 1 \\
     0
   	\end{pmatrix}$} (3);
\draw[->] (3) --node[above ]{$\begin{pmatrix}
   	 0 \\
     &g_2
   	\end{pmatrix}$} (4);
\draw[->] (4) --node[above ]{$\begin{pmatrix}
   	 0 &
     h'
   	\end{pmatrix}$} (6);
\draw[->] (6) --node[above ]{} (7);
\node (11) at (0,-2) {$0$};
\node (12) at (2,-2) {$M$};
\node (13) at (4,-2){$L'_1 \oplus K_{\overline{r}}$};
\node (14) at (6,-2){$K_{\underline{r}}\oplus L'_2$};
\node (16) at (8,-2) {$N$};
\node (17) at (10,-2) {$0$};
\draw[->] (11) --node[above ]{} (12);
\draw[->] (12) --node[above ]{} (13);
   \draw[->] (13) --node[above ]{} (14);	
\draw[->] (14) --node[above ]{} (16);
\draw[->] (16) --node[above ]{} (17);
\draw[->] (3) --node[right]{$\begin{pmatrix}
   	 f' \\
     &1
   	\end{pmatrix}$} (13);
\draw[-] (6,-.3) --node[above ]{} (6,-1.7);
\draw[-] (6.1,-.3) --node[above ]{} (6.1,-1.7);
\draw[-] (2,-.3) --node[above ]{} (2,-1.7);
\draw[-] (2.1,-.3) --node[above ]{} (2.1,-1.7);
\draw[-] (8,-.3) --node[above ]{} (8,-1.7);
\draw[-] (8.1,-.3) --node[above ]{} (8.1,-1.7);
\end{tikzpicture}\]
and
\[
 \begin{tikzpicture}
\node (1) at (0,0) {$0$};
\node (2) at (2,0) {$M$};
\node (3) at (4,0){$M \oplus K_{\overline{r}}$};
\node (4) at (6,0){$K_{\underline{r}}\oplus L'_2$};
\node (6) at (8,0) {$N$};
\node (7) at (10,0) {$0$};
\draw[->] (1) --node[above ]{} (2);
\draw[->] (2) --node[above ]{} (3);
\draw[->] (3) --node[above ]{} (4);
\draw[->] (4) --node[above ]{} (6);
\draw[->] (6) --node[above ]{} (7);
\node (11) at (0,-2) {$0$};
\node (12) at (2,-2) {$M$};
\node (13) at (4,-2){$M$};
\node (14) at (6,-2){$N$};
\node (16) at (8,-2) {$N$};
\node (17) at (10,-2) {$0$};
\draw[->] (11) --node[above ]{} (12);
\draw[->] (12) --node[above ]{1} (13);
   \draw[->] (13) --node[above ]{0} (14);	
\draw[->] (14) --node[above ]{1} (16);
\draw[->] (16) --node[above ]{} (17);
\draw[->] (3) --node[right]{$\begin{pmatrix}
   	1&0
   	\end{pmatrix}$} (13);
\draw[->] (4) --node[right ]{$\begin{pmatrix}
   	0&h
   	\end{pmatrix}$} (14);
\draw[-] (2,-.3) --node[above ]{} (2,-1.7);
\draw[-] (2.1,-.3) --node[above ]{} (2.1,-1.7);
\draw[-] (8,-.3) --node[above ]{} (8,-1.7);
\draw[-] (8.1,-.3) --node[above ]{} (8.1,-1.7);
\end{tikzpicture}\]
which implies that (\ref{E1.1.2}) is the zero element in ${\rm Ext}^2(N,M)$. So it contradicts to the assumption. Thus there exists a $K_i$ such that ${\rm Ext}^{1}(K_i,M)\ne0$ and ${\rm Ext}^{1}(N,K_i)\ne0$. Therefore, we get a path $M\rightarrow K_i\rightarrow N$.

Now assume that for $m=n-1$, the assertion holds. Consider $m=n$, then ${\rm Ext}^n(N,M)\ne0$ means there exists a nonzero exact sequence $0\rightarrow M\rightarrow L_1\rightarrow\cdots\rightarrow L_n\rightarrow N\rightarrow0$. Let $K=\bigoplus_{i=1}^k K_i$ be the kernel of $L_n\rightarrow N$, where each $K_i$ is indecomposable. Similarly, we can conclude that there exists a $K_i$ such that ${\rm Ext}^{n-1}(K_i,M)\ne0$ and ${\rm Ext}^1(N,K_i)\ne0$. By assumption, we get a path $M \rightarrow \cdots\rightarrow K_i$, then a path $M\rightarrow \cdots\rightarrow K_i\rightarrow N$.
\end{proof}

\begin{thm}
\label{thm2}
Let $A$ be a representation-directed algebra. Then we have \[{\rm fpd}_{A}(E^m)=0, m=2,3,\cdots.\]
\end{thm}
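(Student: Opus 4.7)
The plan is to mimic the argument of Theorem \ref{thm1}, replacing the role of the Auslander--Reiten translate $\tau$ by that of ${\rm Ext}^m$ and invoking Lemma \ref{lem3.4} in place of the short-exact-sequence argument used there. Fix an arbitrary brick set $\phi=\{M_1,\cdots,M_n\}$ in ${\rm mod}A$. Because every brick is indecomposable (its endomorphism ring is a field, hence local), Lemma \ref{lem3.1} produces a permutation $\sigma$ of $\{1,\cdots,n\}$ such that no path in ${\rm mod}A$ runs from $M_i$ to $M_j$ whenever $\sigma(i)\le\sigma(j)$. The target is to show that the adjacency matrix $A(\phi,E^m)=({\rm dim}\,{\rm Ext}^m_A(M_i,M_j))_{i,j=1}^n$ becomes strictly upper triangular after relabelling by $\sigma$, and therefore has zero spectral radius.

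Next I would invoke the contrapositive of Lemma \ref{lem3.4}: the absence of any path from $M_j$ to $M_i$ forces ${\rm Ext}^m_A(M_i,M_j)=0$ for each $m\ge 2$. Combined with the previous step, a nonzero $(i,j)$-entry of $A(\phi,E^m)$ yields a path from $M_j$ to $M_i$, and hence $\sigma(j)>\sigma(i)$. Consequently, after conjugating by the permutation matrix corresponding to $\sigma$, every nonzero entry lies strictly above the diagonal. Since the spectral radius is invariant under such a similarity and vanishes on strictly upper triangular matrices, $\rho(A(\phi,E^m))=0$. Taking the supremum over all brick sets and all $n\ge 1$ then gives ${\rm fpd}_A(E^m)=0$.

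The main obstacle, already resolved by Lemma \ref{lem3.4}, is the conversion of a nonzero higher extension into a concrete path of nonzero non-isomorphisms between indecomposables; this is where the bulk of the technical work of the section pays off. Once that bridge is in place, the combinatorial skeleton of Theorem \ref{thm1} transfers essentially verbatim, and there is no need to appeal to Proposition \ref{xxcor2.4} or any auxiliary inequality to pass from $E^1$ to $E^m$.
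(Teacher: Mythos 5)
Your proposal is correct and follows the paper's own proof essentially verbatim: both apply Lemma \ref{lem3.1} to order the brick set acyclically and then use the contrapositive of Lemma \ref{lem3.4} to conclude that the adjacency matrix of $E^m$ is strictly upper triangular up to a permutation, hence has zero spectral radius. Your remark that no auxiliary inequality is needed to pass from $E^1$ to $E^m$ is also accurate, and matches the structure of the paper.
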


\begin{proof}
Let $\phi=\{M_1,\cdots,M_n\}$ be a brick set, $m\ge2$ be a positive integer. By Lemma \ref{lem3.1}, we have a permutation $\sigma$ of $\{1,\cdots,n\}$ such that for $\sigma(i)\le\sigma(j)$, there is no path from $M_{i}$ to $M_{j}$, thus ${\rm Ext}^m_A(M_j,M_i)=0$ by Lemma \ref{lem3.4}. Therefore, the adjacency matrix of $\phi$ is a strictly upper triangular matrix in some order, and $\rho(A(\phi,E^m))=0$. By the arbitrariness by $\phi$, we are done.

\end{proof}

\begin{cor}
Let $A$ be a representation-directed algebra. Then the Frobenius-Perron theory of ${\rm mod}A$ is a collection consisting of 1 and 0.
\end{cor}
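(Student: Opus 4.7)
The plan is to split the collection $\{{\rm fpd}^n_A(E^m)\}_{n\ge1,\,m\ge0}$ into the three ranges $m=0$, $m=1$, and $m\ge 2$, and to handle each by invoking a result already in hand.

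First I would dispatch the case $m=0$. Here $E^0 = {\rm Hom}_A(-,-)$, so for any brick set $\phi=\{X_1,\dots,X_n\}\in\Phi_{n,b}$ the adjacency matrix $A(\phi,E^0)$ has $(i,j)$ entry $\dim{\rm Hom}_A(X_i,X_j)=\delta_{ij}$ by the very definition of a brick set in Definition \ref{xxdef2.1}(2). Thus $A(\phi,E^0)$ is the $n\times n$ identity matrix, whose spectral radius is $1$. Taking the supremum over $\phi\in\Phi_{n,b}$ (which is nonempty, since any simple $A$-module forms a brick), we get ${\rm fpd}^n_A(E^0)=1$ for every $n\ge 1$.

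Next, for $m=1$, by definition ${\rm fpd}^n_A(E^1)\le {\rm fpd}_A(E^1)$, and Corollary \ref{cor1} gives ${\rm fpd}_A(E^1)=0$, so ${\rm fpd}^n_A(E^1)=0$ for every $n\ge 1$. For $m\ge 2$ the same monotonicity together with Theorem \ref{thm2} yields ${\rm fpd}^n_A(E^m)\le{\rm fpd}_A(E^m)=0$, and since the spectral radius of a nonnegative matrix is nonnegative we conclude ${\rm fpd}^n_A(E^m)=0$ for every $n\ge 1$ and every $m\ge 2$.

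Putting the three cases together, every entry of $\{{\rm fpd}^n_A(E^m)\}_{n\ge1,\,m\ge0}$ is either $1$ (exactly when $m=0$) or $0$ (when $m\ge 1$), so the collection indeed consists of only the values $0$ and $1$. No real obstacle arises here: the work has already been done in Corollary \ref{cor1} and Theorem \ref{thm2}; the only point that requires any care is the observation that the brick condition forces the Hom-matrix to be the identity, which secures the $m=0$ row.
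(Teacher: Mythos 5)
Your proof is correct and takes essentially the same route as the paper's: both establish that $A(\phi,E^0)$ is the identity matrix for a brick set $\phi$ (hence ${\rm fpd}^n_A(E^0)=1$), and both appeal to Corollary \ref{cor1} and Theorem \ref{thm2} to conclude that ${\rm fpd}^n_A(E^m)=0$ for $m\ge1$; you merely spell out the monotonicity and nonnegativity step a bit more explicitly than the paper does.
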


\begin{proof}
For each brick set $\phi\in\Phi_{n,b}$, we have $A(\phi,E^{0})=\mathbb{E}_{n}$, where $\mathbb{E}_{n}$ is the identity matrix of order $n$. Thus ${\rm fpd}_{A}^n (E^0)=1$ for $n\geq 1$. Moreover, ${\rm fpd}_{A}^n (E^m)=0$ for $n\geq 1, m>0$ by Corollary \ref{cor1} and Theorem \ref{thm2}.
\end{proof}
\bigskip

\section{Fp-dimension of quotient algebras of a canonical algebra}

We consider a family of algebras closely related to canonical algebras.

\begin{defin}
\label{xxdef1.6}
Let $A_{\mathcal{I}}(n,m)=\Bbbk Q_A/\mathcal{I}$ for $n\ge1,m\ge0$, where $Q_A$ is the following quiver and $\mathcal{I}$ is an admissible ideal of $\Bbbk Q_A$.

\[\begin{tikzpicture}
\node (1) at (0,0) {1};
\node (2) at (1,1) {2};
\node (n) at (3,1) {n};
\node (n+1) at (1,-1) {n+1};
\node (n+m) at (3,-1) {n+m};
\node (n+m+1) at (4,0) {n+m+1};
\draw[->] (n+m+1) --node[right]{$\gamma_{m+1}$} (n+m);
\draw[->][dashed] (n+m) -- (n+1);
\draw[->] (n+1) --node[left]{$\gamma_1$} (1);
\draw[->] (n+m+1) --node[right]{$\alpha_{n}$} (n);
\draw[->] (2) --node[left]{$\alpha_1$} (1);
\draw[->][dashed] (n) -- (2);
\end{tikzpicture}\]

Let $D_{\mathcal{I}}(n)=\Bbbk Q_D/\mathcal{I}$ for $n\ge4$, where $Q_D$ is the following quiver and $\mathcal{I}$ is an admissible ideal of $\Bbbk Q_D$ satisfying $\alpha_1\cdots\alpha_{n-2}+\beta_1\beta_2+\gamma_1\gamma_2\in \mathcal{I}$.

\[
\begin{tikzpicture}
\node (1) at (0,0) {1};
\node (2) at (1,2) {2};
\node (n-2) at (3,2) {n-2};
\node (n-1) at (2,1) {n-1};
\node (n) at (2,-1) {n};
\node (n+1) at (4,0) {n+1};
\draw[->] (n+1) --node[above ]{$\beta_2$} (n-1);
\draw[->] (n+1) --node[below right]{$\gamma_2$} (n);
\draw[->] (n-1) --node[above ]{$\beta_1$} (1);
\draw[->] (n) --node[below left]{$\gamma_1$} (1);
\draw[->] (n+1) --node[above right]{$\alpha_{n-2}$} (n-2);
\draw[->] (2) --node[above left]{$\alpha_1$} (1);
\draw[->][dashed] (n-2) -- (2);
\end{tikzpicture}\]

Let $E_{\mathcal{I}}(n)=\Bbbk Q_E/\mathcal{I}$ for $n=6,7,8$, where $Q_E$ is the following quiver and $\mathcal{I}$ is an admissible ideal of $\Bbbk Q_E$ satisfying $\alpha_1\cdots\alpha_{n-3}+\beta_1\beta_2+\gamma_1\gamma_2\gamma_3\in \mathcal{I}$.

\[
\begin{tikzpicture}
\node (1) at (0,0) {1};
\node (2) at (1,1.5) {2};
\node (n-3) at (3,1.5) {n-3};
\node (n-2) at (2,0) {n-2};
\node (n-1) at (1,-1.5) {n-1};
\node (n) at (3,-1.5) {n};
\node (n+1) at (4,0) {n+1};
\draw[->] (n+1) --node[above]{$\beta_2$} (n-2);
\draw[->] (n+1) --node[below right]{$\gamma_3$} (n);
\draw[->] (n-2) --node[above ]{$\beta_1$} (1);
\draw[->] (n) --node[below left]{$\gamma_2$} (n-1);
\draw[->] (n-1) --node[below left]{$\gamma_1$} (1);
\draw[->] (n+1) --node[above right]{$\alpha_{n-3}$} (n-3);
\draw[->] (2) --node[above left]{$\alpha_1$} (1);
\draw[->][dashed] (n-3) -- (2);
\end{tikzpicture}
\]

Then $A_{\mathcal{I}}(n,m)$ for $n\ge1,m\ge0$, $D_{\mathcal{I}}(n)$ for $n\ge4$ and $E_{\mathcal{I}}(n)$ for $n=6,7,8$ are called {\it quotient algebras} of the canonical algebra of type $A, D$ and $E$, respectively.

\end{defin}

\begin{rem}
When $I$ is trivial, that is, $\mathcal{I}=\{0\}$ for $A_{\mathcal{I}}(n,m)$, $\mathcal{I}=\langle\alpha_1\cdots\alpha_{n-2}+\beta_1\beta_2+\gamma_1\gamma_2\rangle$ for $D_{\mathcal{I}}(n)$ or $\mathcal{I}=\langle\alpha_1\cdots\alpha_{n-3}+\beta_1\beta_2+\gamma_1\gamma_2\gamma_3\rangle$ for $E_{\mathcal{I}}(n)$, the corresponding quotient algebras $A_{\mathcal{I}}(n,m)$,$D_{\mathcal{I}}(n)$ and $E_{\mathcal{I}}(n)$ are just canonical algebras.
\end{rem}

The following lemmas are necessary.

\begin{lem}\cite[Theorem 0.3]{CG1}
\label{xxthm0.3}
Let $Q$ be a finite quiver and let ${\mathfrak A}$ be
the category of finite dimensional
left $\Bbbk Q$-modules.
\begin{enumerate}
\item[(1)]
$\Bbbk Q$ is of finite representation type
if and only if ${\rm fpd} {\mathfrak A}=0$.
\item[(2)]
$\Bbbk Q$ is of tame representation type
if and only if ${\rm fpd} {\mathfrak A}=1$.
\item[(3)]
$\Bbbk Q$ is of wild representation type
if and only if ${\rm fpd} {\mathfrak A}=\infty$.
\end{enumerate}
\end{lem}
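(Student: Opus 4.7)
The plan is to combine the Gabriel/Dlab--Ringel/Drozd classification of hereditary algebras with the structure of the Auslander--Reiten quiver: for a finite connected quiver $Q$, the algebra $\Bbbk Q$ is of finite type iff $Q$ is Dynkin, of tame type iff $Q$ is Euclidean (extended Dynkin), and wild otherwise. Thus it suffices to compute ${\rm fpd}\,\mathfrak{A}$ in each of these three cases. Since $\Bbbk Q$ is hereditary, Example \ref{xxex2.6} allows one to identify ${\rm fpd}_{\mathfrak A}(E^1)$ with the Frobenius--Perron dimension of the shift functor on $D^b(\mathfrak A)$, which is often more tractable.

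For part (1), if $Q$ is Dynkin then $\Bbbk Q$ is representation-finite and its AR quiver is finite and acyclic; hence $\Bbbk Q$ is representation-directed and Corollary \ref{cor1} yields ${\rm fpd}\,\mathfrak A=0$. The converse I would prove contrapositively, showing that non-Dynkin $Q$ (treated in (2) and (3)) already forces ${\rm fpd}\,\mathfrak A \ge 1$.

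For part (2), for tame $\Bbbk Q$ the AR quiver decomposes into a preprojective component, a preinjective component, and a $\mathbb{P}^1$-family of stable tubes. Choosing simple regular modules $E_{\lambda_1},\ldots,E_{\lambda_n}$ from distinct homogeneous tubes gives a brick set with ${\rm Hom}(E_{\lambda_i},E_{\lambda_j})=0={\rm Ext}^1(E_{\lambda_i},E_{\lambda_j})$ for $i\neq j$ and $\dim{\rm Ext}^1(E_{\lambda_i},E_{\lambda_i})=1$, so the adjacency matrix is $I_n$ with spectral radius $1$; this gives ${\rm fpd}\,\mathfrak A\ge 1$. For the upper bound I would use the positive semi-definiteness of the Euler form of a Euclidean quiver together with the brick condition $\dim{\rm End}(M)=1$: for any brick set $\{M_1,\ldots,M_n\}$ the Cartan/Euler pairing constrains the entries of $A(\phi,E^1)$, and a spectral argument (bounding row sums via the Euler quadratic form and the imaginary root structure) shows $\rho(A(\phi,E^1))\le 1$.

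For part (3), for wild $Q$ the Euler form is indefinite and bricks with arbitrarily large self- and mutual-extensions exist. The idea is to exhibit, for each $N$, a brick set with adjacency matrix dominating a $\ge N$-Kronecker pattern: using a wild generalized Kronecker subquiver or, more generally, a controlled family of bricks produced from Schofield's theory of perpendicular categories and the existence of bricks realizing prescribed dimension vectors in wild components, one obtains brick sets whose spectral radius grows without bound. Combined with Lemma \ref{xxlem1.7}, this gives ${\rm fpd}\,\mathfrak A=\infty$. The main obstacle, as usual, is the tame upper bound ${\rm fpd}\le 1$: one must rule out arbitrarily large brick sets crossing between distinct tubes or mixing regular with (pre)projective/injective modules in a way that inflates the spectral radius, and this is where the fine structure of the tame regular part (orthogonality between distinct tubes, one-parameter families) must be exploited.
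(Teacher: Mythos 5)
First, note that the paper does not prove Lemma~\ref{xxthm0.3}: it is quoted verbatim from \cite[Theorem 0.3]{CG1}, so there is no ``paper's own proof'' to compare against. Your task is therefore to reconstruct the argument of \cite{CG1}, and your overall plan (classify $Q$ as Dynkin/Euclidean/wild and compute ${\rm fpd}$ in each case, using the representation-directed case for Dynkin and building brick sets with prescribed ${\rm Ext}^1$ otherwise) is indeed the route taken there. The lower bounds you describe are fine: a single simple regular brick already gives $\rho\ge1$ in the tame case, and the reduction of non-Dynkin to cases (2), (3) handles the converse of (1).

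The genuine gap is in the tame upper bound, which is also the step you flag as the main obstacle, and the heuristic you offer there will not work as stated. Bounding ``row sums'' of $A(\phi,E^1)$ does not follow from the positive semi-definiteness of the Euler \emph{quadratic} form alone; even the weaker consequence $\sum_{i,j}a_{ij}\le n$ (from $q(\sum_i\underline{\dim}M_i)\ge0$) is insufficient, since there are nonnegative integer matrices with total entry sum $n$ and $\rho>1$. What one actually needs is the positive semi-definiteness of the \emph{symmetrized} Euler form on the whole lattice. For a hereditary algebra and a brick set $\phi=\{M_1,\dots,M_n\}$ one has $\langle\underline{\dim}M_i,\underline{\dim}M_j\rangle=\delta_{ij}-a_{ij}$, so the Gram matrix of $(\,\cdot\,,\,\cdot\,)=\langle\cdot,\cdot\rangle+\langle\cdot,\cdot\rangle^{\mathrm{op}}$ on the vectors $\underline{\dim}M_i$ is $G=2I-(A+A^{T})$, where $A=A(\phi,E^1)$. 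In the Euclidean case $(\,\cdot\,,\,\cdot\,)$ is positive semi-definite, so $G\succeq0$, i.e.\ $\rho(A+A^{T})\le2$; combined with the standard inequality $\rho(A)\le\rho\bigl(\tfrac{1}{2}(A+A^{T})\bigr)$ for entrywise nonnegative $A$ (take a Perron vector $v\ge0$ of $A$ and compare Rayleigh quotients), this gives $\rho(A)\le1$. That is the argument you should substitute for the row-sum claim. Similarly, for part (3) the appeal to ``Schofield's theory'' is more than is needed and leaves the construction implicit: it suffices to observe that for a wild connected $Q$ the Euler quadratic form is indefinite, so there are Schur roots $d$ with $q(d)$ arbitrarily negative, and the generic brick $M$ of dimension vector $d$ gives a one-element brick set with $\rho=\dim\mathrm{Ext}^1(M,M)=1-q(d)\to\infty$. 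Finally, you should make the reduction to connected $Q$ explicit, since the statement does not assume connectivity and both the representation-type trichotomy and the brick-set/Ext decomposition behave well under finite products of algebras.
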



\begin{lem}
\label{lem4.4}
Let $A=A_{\mathcal{I}}(n,m)$, if $\mathcal{I}=\langle\alpha_1\cdots\alpha_{n}+c\gamma_1\cdots\gamma_{m+1}\rangle$ for a nonzero constant $c$, then the Auslander-Reiten quiver of $A$ is connected by two Auslander-Reiten quiver with an indecomposable projective-injective module.
\end{lem}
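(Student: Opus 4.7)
The plan is to identify an indecomposable projective--injective module of $A$, classify the indecomposable $A$-modules into three explicit families, and then assemble the Auslander--Reiten quiver of $A$ from this classification.

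First, because the single relation $\alpha_{1}\cdots\alpha_{n}=-c\gamma_{1}\cdots\gamma_{m+1}$ (with $c\ne 0$) identifies the two maximal paths from the source $n+m+1$ to the sink $1$, the projective $P_{n+m+1}$ has basis the paths out of $n+m+1$ modulo this identification, and therefore has dimension vector $(1,1,\ldots,1)$; a dual computation on $e_{1}A$ gives the same dimension vector for $I_{1}$, and comparing the unique simple top $S_{n+m+1}$ of $P_{n+m+1}$ with the unique simple socle $S_{1}$ of $I_{1}$ yields $P_{n+m+1}\cong I_{1}$. Hence $P_{n+m+1}$ is an indecomposable projective--injective module in ${\rm mod}\,A$.

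Second, introduce the quotient algebras $B_{+}:=A/\langle e_{n+m+1}\rangle$ and $B_{-}:=A/\langle e_{1}\rangle$. In each quotient both monomials of the canonical relation terminate at the killed vertex, so the relation becomes vacuous and the remaining quiver is a tree of Dynkin type $A_{n+m}$. Hence $B_{+}$ and $B_{-}$ are hereditary representation-finite, and ${\rm mod}\,B_{+}$ and ${\rm mod}\,B_{-}$ embed as the full subcategories of ${\rm mod}\,A$ consisting of modules with $M_{n+m+1}=0$ and with $M_{1}=0$ respectively. The key trichotomy is: every indecomposable $A$-module $M$ either lies in ${\rm mod}\,B_{+}$, or lies in ${\rm mod}\,B_{-}$, or is isomorphic to $P_{n+m+1}$. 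To prove it, assume $M_{1}\ne 0$ and $M_{n+m+1}\ne 0$ and examine the operators $a:=\alpha_{1}\cdots\alpha_{n}$ and $b:=\gamma_{1}\cdots\gamma_{m+1}$, viewed as maps $M_{n+m+1}\to M_{1}$; the relation forces $a=-cb$. If $a=b=0$ then the submodule of $M$ generated by $M_{n+m+1}$ has zero component at vertex $1$, and together with the semisimple submodule $M_{1}$ (which is a submodule because $1$ is a sink) this produces a nontrivial direct-sum decomposition of $M$, contradicting indecomposability. If $a,b$ are both nonzero, then $c\ne 0$ together with an inductive dimension count along each branch (each intermediate arrow is forced to map bijectively onto a one-dimensional image) forces $\dim M_{i}=1$ for every $i$ and recovers the module structure of $P_{n+m+1}$. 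I expect this rigidity argument to be the main obstacle of the proof.

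Third, since $P_{n+m+1}$ is projective--injective, its almost split sequence takes the shape
\[
0\to{\rm rad}\,P_{n+m+1}\to P_{n+m+1}\oplus\bigl({\rm rad}\,P_{n+m+1}/{\rm soc}\,P_{n+m+1}\bigr)\to P_{n+m+1}/{\rm soc}\,P_{n+m+1}\to 0,
\]
so the only Auslander--Reiten arrows incident to $P_{n+m+1}$ are ${\rm rad}\,P_{n+m+1}\to P_{n+m+1}$ and $P_{n+m+1}\to P_{n+m+1}/{\rm soc}\,P_{n+m+1}$. A direct check identifies ${\rm rad}\,P_{n+m+1}$ (dimension vector $(1,\ldots,1,0)$) as the unique indecomposable full-support module of $B_{+}$ and $P_{n+m+1}/{\rm soc}\,P_{n+m+1}$ (dimension vector $(0,1,\ldots,1)$) as the unique indecomposable full-support module of $B_{-}$. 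By the trichotomy, all other Auslander--Reiten arrows of $A$ already appear as Auslander--Reiten arrows of either $B_{+}$ or $B_{-}$, so the Auslander--Reiten quiver of $A$ is obtained from the two Auslander--Reiten quivers $\Gamma({\rm mod}\,B_{+})$ and $\Gamma({\rm mod}\,B_{-})$ by inserting $P_{n+m+1}$ together with the two arrows displayed above, as the lemma asserts.
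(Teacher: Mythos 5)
Your proposal takes a genuinely different route from the paper's.  The paper does not prove a trichotomy of indecomposables; instead it \emph{knits} the Auslander--Reiten quiver of $A$ from the projective side, observes that the knit agrees with that of the path algebra $A_{1}=\Bbbk Q_{1}$ (vertex $n+m+1$ deleted) until the radical of $P(n+m+1)$ appears, inserts the AR-sequence around the projective--injective $P(n+m+1)=I(1)$, continues with $A_{2}$ (vertex $1$ deleted), and then invokes Auslander's criterion (every simple is preprojective, \cite[Proposition 4.7]{A}) to conclude that the knit exhausts all of ${\rm mod}\,A$.  Your approach instead tries to classify the indecomposable $A$-modules first and only then assemble the quiver.

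The main problem is that your proof of the trichotomy has a real gap in the case $a=b=0$.  You claim that when the two composites $a=\alpha_{1}\cdots\alpha_{n}$ and $b=\gamma_{1}\cdots\gamma_{m+1}$ from $M_{n+m+1}$ to $M_{1}$ vanish, the submodule $N$ generated by $M_{n+m+1}$ together with the semisimple submodule $M_{1}$ gives a nontrivial direct-sum decomposition.  But $N+M_{1}$ need not equal $M$.  For example, on the commutative square ($n=2$, $m=1$) take $\dim M=(1,2,1,1)$ with $\alpha_{2}$ injective into $\ker\alpha_{1}$ and $\gamma_{1}=0$: then $N$ has dimension vector $(0,1,1,1)$, so $N+M_{1}$ has dimension vector $(1,1,1,1)\neq(1,2,1,1)$.  (This module \emph{does} decompose, but along a different direct summand, so the conclusion is correct while the argument is not.)  In general, when $a=b=0$ you only get $N\cap M_{1}=0$; you still have to explain why the complementary part of $M$ splits off, and the explanation depends on the branch structure, not just on $M_{1}$ being a simple submodule.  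Similarly, in the $a,b\neq 0$ case you yourself flag the ``rigidity argument'' as the main obstacle, and as written it is only a sketch: you need to rule out, for instance, $a$ being nonzero but non-injective, which requires an argument that $\ker a$ is killed by \emph{all} arrows out of $n+m+1$ before it can be split off.  Finally, even granted the trichotomy, the step ``all other AR-arrows of $A$ already appear as AR-arrows of $B_{\pm}$'' needs justification: $\mathrm{mod}\,B_{\pm}$ being a full subcategory of $\mathrm{mod}\,A$ does not by itself guarantee that $\tau_{A}$ and $\tau_{B_{\pm}}$ agree or that almost split sequences in one category remain almost split in the other.  The paper sidesteps all of this by knitting, which automatically produces honest AR-sequences for $A$ itself.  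If you want to keep your classification-first strategy, the trichotomy should be proved by a more careful vertex-by-vertex splitting argument (or simply read off from the knitted AR-quiver), and the compatibility of AR-sequences across the inclusion $\mathrm{mod}\,B_{\pm}\hookrightarrow\mathrm{mod}\,A$ should be addressed explicitly.
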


\begin{proof}
Define $A_1=\Bbbk Q_{1}$ where $Q_1$ is a subquiver of $Q_A$ that the point $n+m+1$ is removed, $A_2=\Bbbk Q_{2}$ where $Q_2$ is a subquiver of $Q_A$ that the point $1$ is removed.

Notice that the left hand of the Auslander-Reiten quiver of ${\rm mod}A$ is equivalent to the left hand of ${\rm mod}A_{1}$ until the radical of $P(n+m+1)$ appears. So the left hand of the Auslander-Reiten quiver is \[\begin{tikzpicture}
\node (1) at (0,0) {$P(1)$};
\node (2) at (1,1) {$P(2)$};
\node (n) at (2,2) {$P(n)$};
\node (n+1) at (1,-1) {$P(n+1)$};
\node (n+m) at (2,-2) {$P(n+m)$};
\node (c)at(5,2){$S(n)$};
\draw[<-][dashed] (n+m) -- (n+1);
\draw[dashed] (c) -- (n);
\draw[<-] (n+1) --  (1);
\draw[<-] (2) --  (1);
\draw[<-][dashed] (n) -- (2);
\node (a) at (3,0) {$M(1)$};
\node (b) at (4,1) {$M(2)$};
\node (d) at (4,-1) {$M(3)$};
\node (e) at (5,-2) {$S(n+m)$};
\draw[<-][dashed] (e) -- (d);
\draw[<-] (d) --  (a);
\draw[<-] (b) --  (a);
\draw[<-][dashed] (c) -- (b);
\draw[dashed] (b) -- (2);
\draw[dashed] (a) -- (1);
\draw[dashed] (d) -- (n+1);
\draw[dashed] (e) -- (n+m);
\end{tikzpicture}\]
where $P(i)$ is the projective module at $i$, $S(j)$ is the simple module at $j$, $M(1),M(2),M(3)$ is respectively defined as follows\[\begin{tikzpicture}
\node (1) at (0,0) {$\Bbbk $};
\node (2) at (0.2,0.2) {$\Bbbk $};
\node (n) at (1,0.2) {$\Bbbk $};
\node (n+1) at (.2,-.2) {$\Bbbk $};
\node (n+m) at (1,-.2) {$\Bbbk $};
\node (n+m+1) at (1.2,0) {0};
\draw[dashed] (n+m) -- (n+1);
\draw[dashed] (n) -- (2);
\end{tikzpicture},\begin{tikzpicture}
\node (1) at (0,0) {$0$};
\node (2) at (0.2,0.2) {$\Bbbk $};
\node (n) at (1,0.2) {$\Bbbk $};
\node (n+1) at (.2,-.2) {$0$};
\node (n+m) at (1,-.2) {$0$};
\node (n+m+1) at (1.2,0) {0};
\draw[dashed] (n+m) -- (n+1);
\draw[dashed] (n) -- (2);
\end{tikzpicture},\begin{tikzpicture}
\node (1) at (0,0) {$0$};
\node (2) at (0.2,0.2) {$0$};
\node (n) at (1,0.2) {$0$};
\node (n+1) at (.2,-.2) {$\Bbbk $};
\node (n+m) at (1,-.2) {$\Bbbk $};
\node (n+m+1) at (1.2,0) {0.};
\draw[dashed] (n+m) -- (n+1);
\draw[dashed] (n) -- (2);
\end{tikzpicture}\] Similarly, the right hand of the Auslander-Reiten quiver of ${\rm mod}A$ is
\[\begin{tikzpicture}
\node (1) at (0,0) {$M(4)$};
\node (2) at (-1,1) {$M(2)$};
\node (n) at (-2,2) {$S(2)$};
\node (n+1) at (-1,-1) {$M(3)$};
\node (n+m) at (-2,-2) {$S(n+1)$};
\node (c)at(1,2){$I(n)$};
\draw[->][dashed] (n+m) -- (n+1);
\draw[dashed] (c) -- (n);
\draw[->] (n+1) --  (1);
\draw[->] (2) --  (1);
\draw[->][dashed] (n) -- (2);
\node (a) at (3,0) {$I(n+m+1)$};
\node (b) at (2,1) {$I(2)$};
\node (d) at (2,-1) {$I(n+m)$};
\node (e) at (1,-2) {$I(n+1)$};
\draw[->][dashed] (e) -- (d);
\draw[->] (d) --  (a);
\draw[->] (b) --  (a);
\draw[->][dashed] (c) -- (b);
\draw[dashed] (b) -- (2);
\draw[dashed] (a) -- (1);
\draw[dashed] (d) -- (n+1);
\draw[dashed] (e) -- (n+m);
\end{tikzpicture}\]
where $I(i)$ is the injective module at $i$, $S(j)$ is the simple module at $j$, $M(4)$ is defined as follow\[\begin{tikzpicture}
\node (1) at (0,0) {$0.$};
\node (2) at (0.2,0.2) {$\Bbbk $};
\node (n) at (1,0.2) {$\Bbbk $};
\node (n+1) at (.2,-.2) {$\Bbbk $};
\node (n+m) at (1,-.2) {$\Bbbk $};
\node (n+m+1) at (1.2,0) {$\Bbbk $};
\draw[dashed] (n+m) -- (n+1);
\draw[dashed] (n) -- (2);
\end{tikzpicture}\]
Note that $P(n+m+1)=I(1)=\begin{tikzpicture}
\node (1) at (0,0) {$\Bbbk $};
\node (2) at (0.2,0.2) {$\Bbbk $};
\node (n) at (1,0.2) {$\Bbbk $};
\node (n+1) at (.2,-.2) {$\Bbbk $};
\node (n+m) at (1,-.2) {$\Bbbk $};
\node (n+m+1) at (1.2,0) {$\Bbbk $};
\draw[dashed] (n+m) -- (n+1);
\draw[dashed] (n) -- (2);
\end{tikzpicture}$ is an indecomposable projective-injective module and the radical of $P(n+m+1)$ is $M_1$, so there exists a irreducible morphisma from $M(1)$ to $P(n+m+1)$. Similarly, there exists a irreducible morphisma from $I(1)$ to $M(4)$. Thus we get a Auslander-Reiten sequence.
\[\begin{tikzpicture}
\node (1) at (1.2,0) {$M(4)$};
\node (2) at (-1,1) {$M(2)$};
\node (n+1) at (-1,-1) {$M(3)$};
\node (b) at (-1,0) {$P(n+m+1)$};
\node (x) at (-3.2,0) {$M(1)$};
\draw[->] (n+1) --  (1);
\draw[->] (2) --  (1);
\draw[->] (x) --  (2);
\draw[->] (x) --  (n+1);
\draw[->] (x) --  (b);
\draw[->] (b) --  (1);
\end{tikzpicture}\]
Which implies that we can connect the left hand and the right hand of ${\rm mod}A$ by the indecomposable projective-injective module $P(n+m+1)$.
Thus every simple module is preprojective (also preinjective). By \cite[Proposition 4.7]{A}, we get the whole Auslander-Reiten quiver of ${\rm mod}A$.
\end{proof}

Now we calculate the Frobenius-Perron dimension of quotient algebras of the canonical algebra of type $A, D$ and $E$.

\begin{thm}
\label{xxthm4.3}
Let $A=A_{\mathcal{I}}(n,m)$.
 Then  \[{\rm fpd}_{A}(E^1)=0\ or\ 1.\] Moreover, ${\rm fpd}_{A}(E^1)=0$ if and only if there exists $0\ne c\in\Bbbk$ such that $\alpha_1\cdots\alpha_{n}+c\gamma_1\cdots\gamma_{m+1}\in\mathcal{I}$.
\end{thm}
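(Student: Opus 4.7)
The plan is to prove $\mathrm{fpd}_A(E^1)\leq 1$ globally, then pin down the exact value by analyzing the admissible ideal $\mathcal I$. For the upper bound I will note that the underlying graph of $Q_A$ is the extended Dynkin diagram $\tilde A_{n+m}$ (a single cycle), so $\Bbbk Q_A$ is tame hereditary and $\mathrm{fpd}_{\Bbbk Q_A}(E^1)=1$ by Lemma \ref{xxthm0.3}; Proposition \ref{prop1} then yields $\mathrm{fpd}_A(E^1)\leq 1$.

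For the $\Leftarrow$ direction I will suppose $c\neq 0$ satisfies $\alpha_1\cdots\alpha_n+c\gamma_1\cdots\gamma_{m+1}\in\mathcal I$ and set $B:=\Bbbk Q_A/\langle\alpha_1\cdots\alpha_n+c\gamma_1\cdots\gamma_{m+1}\rangle$. Lemma \ref{lem4.4} shows that $\mathrm{mod}\,B$ has its Auslander-Reiten quiver obtained by gluing two preprojective/preinjective components along a unique indecomposable projective-injective, so every indecomposable of $\mathrm{mod}\,B$ is directing and $B$ is representation-directed; Corollary \ref{cor1} then gives $\mathrm{fpd}_B(E^1)=0$, and because $A$ is a quotient of $B$, Proposition \ref{prop1} forces $\mathrm{fpd}_A(E^1)=0$.

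For the $\Rightarrow$ direction I will argue by contraposition. Writing $p_\alpha:=\alpha_1\cdots\alpha_n$ and $p_\gamma:=\gamma_1\cdots\gamma_{m+1}$, assume no $c\neq 0$ puts $p_\alpha+c p_\gamma$ in $\mathcal I$. Since the only parallel pair of paths in $Q_A$ is $(p_\alpha,p_\gamma)$, every admissible generator of $\mathcal I$ is either a subpath zero relation on a single arm or a commutation relation $ap_\alpha+bp_\gamma$; the hypothesis forbids nontrivial commutation relations, and if $\mathcal I$ contained subpath relations on both arms then both $p_\alpha$ and $p_\gamma$ would vanish in $A$, forcing $p_\alpha+p_\gamma\in\mathcal I$---a contradiction. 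Hence either $\mathcal I=0$, in which case $A=\Bbbk Q_A$ is tame hereditary and Lemma \ref{xxthm0.3} gives $\mathrm{fpd}_A(E^1)=1$; or, possibly after swapping the two arms, $\mathcal I$ is generated by top-arm subpath relations with $p_\alpha=0$ and $p_\gamma\neq 0$ in $A$ (admissibility $\mathcal I\subseteq J^2$ forces $n\geq 2$).

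For this remaining subcase I plan to exhibit a two-element brick set of spectral radius at least $1$. Let $X$ be the module with $V_i=\Bbbk$ for $i\in\{1,n+1,n+2,\ldots,n+m+1\}$ and $V_i=0$ otherwise, every $\gamma$-arrow acting as the identity and every $\alpha$-arrow as zero; $X$ is a valid $A$-module because every top-arm relation factors through a zero space, and $\mathrm{End}(X)=\Bbbk$ follows by propagating a scalar along the chain of identity maps on the $\gamma$-arm. Let $Y:=S_2$, also a brick; the supports of $X$ and $Y$ meet only trivially at vertex $2$, so $\mathrm{Hom}(X,Y)=\mathrm{Hom}(Y,X)=0$ and $\{X,Y\}$ is a brick set. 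For either direction of extension, the middle term will be supported on $\{1,2,n+1,\ldots,n+m+1\}$ with the $\gamma$-arm structure of $X$ and a single free scalar $\lambda$ for $f_{\alpha_1}\colon V_2\to V_1$; the vanishing $V_3=\cdots=V_n=0$ will force every top-arm relation in $\mathcal I$ to hold automatically, so any $\lambda\neq 0$ yields a genuine non-split extension. Therefore both $\mathrm{Ext}^1(X,Y)$ and $\mathrm{Ext}^1(Y,X)$ are nonzero, and the adjacency matrix of $\{X,Y\}$ under $E^1$ has the form $\bigl(\begin{smallmatrix}0&\ast\\ \ast&0\end{smallmatrix}\bigr)$ with $\ast\geq 1$ and spectral radius at least $1$. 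Combined with the upper bound this yields $\mathrm{fpd}_A(E^1)=1$. The main obstacle will be this final subcase: I must verify both that the candidate extension obeys every relation of $\mathcal I$ (which it does, automatically, once the top arm is truncated at vertex $2$) and that it is genuinely non-split (which rules out $\lambda=0$).
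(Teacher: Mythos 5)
Your overall strategy matches the paper's: use Lemma~\ref{lem4.4} plus Corollary~\ref{cor1} and Proposition~\ref{prop1} for the $\Leftarrow$ direction, and the tame hereditary path algebra $\Bbbk Q_A$ (the canonical algebra of type $A$) plus Proposition~\ref{prop1} for the upper bound $\mathrm{fpd}_A(E^1)\leq 1$. The case analysis of $\mathcal I$ in the $\Rightarrow$ direction is also fine. However, your final subcase contains a genuine gap.

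You claim that for $X=M(0)$ (the module supported on $\{1,n+1,\dots,n+m+1\}$) and $Y=S_2$, both $\mathrm{Ext}^1(X,Y)$ and $\mathrm{Ext}^1(Y,X)$ are nonzero, arguing that the middle term ``for either direction of extension'' can be taken to be the same module $E$ supported on $\{1,2,n+1,\dots,n+m+1\}$ with $\alpha_1$ acting by a free scalar $\lambda$. This is false for $n\geq 3$. The module $E$ with $\lambda\neq 0$ realizes only $0\to X\to E\to Y\to 0$: since the only arrow out of vertex $2$ is $\alpha_1\colon 2\to 1$ and $V_1^E=\Bbbk\neq 0$, the one-dimensional subspace at vertex $2$ is a submodule of $E$ precisely when $\alpha_1^E$ annihilates it, which forces $\lambda=0$. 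Hence every short exact sequence $0\to S_2\to E\to M(0)\to 0$ splits, so $\mathrm{Ext}^1_A(M(0),S_2)=0$ for $n\geq 3$ (this also follows from the Euler form for $\Bbbk Q_A$ together with Proposition~\ref{prop1}). The adjacency matrix of $\{X,Y\}$ is then triangular with zero diagonal, so its spectral radius is $0$, not $\geq 1$. The extension-direction asymmetry of a quiver is essential here and cannot be ignored.

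The paper closes the cycle by taking the larger brick set $\phi=\{S(2),S(3),\dots,S(n),M(0)\}$. The nonvanishing extensions are $\mathrm{Ext}^1(M(0),S(n))\neq 0$ (coming from the arrow $\alpha_n\colon n+m+1\to n$ and the $\Bbbk$ at vertex $n+m+1$ of $M(0)$), $\mathrm{Ext}^1(S(i),S(i-1))\neq 0$ for $3\leq i\leq n$ (from the arrows $\alpha_{i-1}$), and $\mathrm{Ext}^1(S(2),M(0))\neq 0$ (from $\alpha_1\colon 2\to 1$ and the $\Bbbk$ at vertex $1$ of $M(0)$). These form a directed cycle of length $n$, so the adjacency matrix dominates a cyclic permutation matrix and has spectral radius $\geq 1$. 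Your two-element set only captures one edge of this cycle; to repair your argument you would need to replace $\{X,Y\}$ by the full $n$-element set (or by a single regular brick with a self-extension, but that requires extra work).
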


\begin{proof}
(1) By Lemma \ref{lem4.4}, if $\mathcal{I}=\langle\alpha_1\cdots\alpha_{n}+c\gamma_1\cdots\gamma_{m+1}\rangle$ with a nonzero constant $c$, then $A$ is a representation-directed algebra. Therefore, by Corollary \ref{cor1}, ${\rm fpd}_{A}(E^1)=0$.

(2) If there exists a scalar $c\ne0$ such that $\alpha_1\cdots\alpha_{n}+c\gamma_1\cdots\gamma_{m+1}\in\mathcal{I}$, then $A$ is a quotient algebra of the algebra studied in (1). So by Proposition \ref{prop1}, we have ${\rm fpd}_{A}(E^1)=0$.

(3) If there is no scalar $c\ne0$ such that $\alpha_1\cdots\alpha_{n}+c\gamma_1\cdots\gamma_{m+1}\in\mathcal{I}$, we have $\alpha_1\cdots\alpha_{n}\ne0$ or $\gamma_1\cdots\gamma_{m+1}\ne0$ in $A$. Without loss of generality, assume $\gamma_1\cdots\gamma_{m+1}\ne0$. Consider the brick set $
\phi=\{S(2),S(3),\cdots,S(n),M(0)\},$ where $M(0)$ is defined as follow\[\begin{tikzpicture}
\node (1) at (0,0) {$\Bbbk$};
\node (2) at (0.2,0.2) {$0$};
\node (n) at (1,0.2) {$0$};
\node (n+1) at (.2,-.2) {$\Bbbk$};
\node (n+m) at (1,-.2) {$\Bbbk$};
\node (n+m+1) at (1.2,0) {$\Bbbk.$};
\draw[dashed] (n+m) -- (n+1);
\draw[dashed] (n) -- (2);
\end{tikzpicture}\]It is clear that ${\rm Ext}_{A}^{1}(S(i),S(i-1))\ne0$ for $3\le i\le n$, ${\rm Ext}_{A}^{1}(S(2),M(0))\ne0$ and ${\rm Ext}_{A}^{1}(M(0),S(n))\ne0$. Therefore we get\[
{\rm fpd}_{A}(E^1)\ge \rho\begin{pmatrix}
   	0 & 1&0&\cdots &0\\
   	0 & 0&1&\cdots&0\\
   \vdots&\vdots&\vdots&&\vdots\\
   0&0&0&\cdots&1\\
   1&0&0&\cdots&0
   	\end{pmatrix}=1.
\]Note that $A$ is a quotient algebra of a canonical algebra $\hat{A}$ which means that ${\rm fpd}_{A}(E^1)\le{\rm fpd}_{\hat{A}}(E^1)=1$ by Proposition \ref{prop1}. It follows that ${\rm fpd}_{A}(E^1)=1$.
\end{proof}

\begin{thm}
\label{xxthm4.4}
Let $D=D_{\mathcal{I}}(n)$.
 Then  \[{\rm fpd}_{D}(E^1)=0\ or\ 1.\] Moreover, ${\rm fpd}_{D}(E^1)=0$ if and only if there exists scalars $c_1,c_2\ne0$ such that $\alpha_1\cdots\alpha_{n-2}+c_1\beta_1\beta_2$ and $\alpha_1\cdots\alpha_{n-2}+c_2\gamma_1\gamma_{2}$ belong to $\mathcal{I}$.
\end{thm}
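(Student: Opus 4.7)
The plan is to mirror the three-step argument of Theorem \ref{xxthm4.3}, replacing the single commutativity relation $\alpha_1\cdots\alpha_n+c\gamma_1\cdots\gamma_{m+1}$ by the pair of relations $\alpha_1\cdots\alpha_{n-2}+c_1\beta_1\beta_2$ and $\alpha_1\cdots\alpha_{n-2}+c_2\gamma_1\gamma_2$ (with $c_1,c_2\neq 0$). The first step, which is a type $D$ analogue of Lemma \ref{lem4.4}, is to show that when $\mathcal{I}$ is precisely generated by these two relations together with the canonical relation, the algebra $D$ is representation-directed. I would argue this by computing the Auslander-Reiten quiver of ${\rm mod}\,D$ as the gluing of two smaller hereditary pieces: removing the source vertex $n+1$ from $Q_D$ yields a hereditary algebra $D_1$ of Dynkin type $D_n$, and removing the sink vertex $1$ yields another hereditary algebra $D_2$ of Dynkin type $D_n$. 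The two commutativity relations force $P(n+1)=I(1)$ to be an indecomposable projective-injective module whose Loewy structure is determined by $\alpha_1\cdots\alpha_{n-2}=-c_1\beta_1\beta_2=-c_2\gamma_1\gamma_2$, and the AR-sequence starting from the radical of $P(n+1)$ glues the left and right halves of the AR-quiver into a single connected quiver without cycles. As in the proof of Lemma \ref{lem4.4}, every simple module then becomes preprojective, so $D$ is representation-directed and Corollary \ref{cor1} gives ${\rm fpd}_{D}(E^1)=0$.

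For the second step of the forward direction, for a general $\mathcal{I}$ containing both $\alpha_1\cdots\alpha_{n-2}+c_1\beta_1\beta_2$ and $\alpha_1\cdots\alpha_{n-2}+c_2\gamma_1\gamma_2$, the algebra $D$ is a quotient of the algebra handled in the first step, so Proposition \ref{prop1} immediately yields ${\rm fpd}_{D}(E^1)=0$. For the converse, suppose no such pair $c_1,c_2$ exists. Combined with the canonical relation $\alpha_1\cdots\alpha_{n-2}+\beta_1\beta_2+\gamma_1\gamma_2\in\mathcal{I}$, this forces that at least one of $\beta_1\beta_2$ and $\gamma_1\gamma_2$ is not a scalar multiple of $\alpha_1\cdots\alpha_{n-2}$ modulo $\mathcal{I}$; without loss of generality assume this occurs on the $\gamma$-branch. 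I would then exhibit a brick set consisting of the simples $S(2),\ldots,S(n-1)$ (covering the $\alpha$- and $\beta$-branches) together with a single non-simple brick $M(0)$ supported on the $\gamma$-branch, arranged so that the $E^1$-quiver on this brick set is a directed cycle and the adjacency matrix is a cyclic permutation matrix with spectral radius $1$. This gives ${\rm fpd}_{D}(E^1)\geq 1$. Finally, $D$ is a quotient of the canonical algebra $\hat D$ of type $D$, which has ${\rm fpd}_{\hat D}(E^1)=1$, so Proposition \ref{prop1} gives ${\rm fpd}_{D}(E^1)\leq 1$, and we conclude ${\rm fpd}_{D}(E^1)=1$.

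The main obstacle is the first step: verifying carefully that the doubly-commutative type $D$ algebra is representation-directed. The branching at both ends of the $D_n$-shaped quiver makes the AR-quiver computation substantially more involved than in Lemma \ref{lem4.4}, and one must track how two independent commutativity relations interact around the central projective-injective module, as well as check that the only irreducible morphisms between the two halves of the AR-quiver come from the AR-sequence through this module. The brick-set construction in the converse is also somewhat more delicate than in the type $A$ case because of the extra branch, but it can be modelled on standard string- or tree-module constructions for quotients of canonical algebras.
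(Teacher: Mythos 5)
Your forward direction is sound and follows the paper's plan: show the doubly-commutative algebra is representation-directed by the gluing argument (the type $D$ analogue of Lemma~\ref{lem4.4}), then use Corollary~\ref{cor1} and Proposition~\ref{prop1}. The subquivers obtained by deleting $n+1$ or $1$ are indeed of Dynkin type $D_n$, so this part is a faithful adaptation.

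The converse, however, has a genuine gap in the brick-set construction. You place $M(0)$ on the $\gamma$-branch and let the simples $S(2),\ldots,S(n-1)$ cover the $\alpha$- and $\beta$-branches. This cannot work for two independent reasons. First, a module $M(0)$ supported only at $\{1,n,n+1\}$ has $M(0)_2=M(0)_{n-1}=0$, so the paths $\alpha_1\cdots\alpha_{n-2}$ and $\beta_1\beta_2$ act as zero on it; the canonical relation $\alpha_1\cdots\alpha_{n-2}+\beta_1\beta_2+\gamma_1\gamma_2\in\mathcal{I}$ then forces $\gamma_1\gamma_2=0$ on $M(0)$, which (with $1$-dimensional fibers) means one of $\gamma_1,\gamma_2$ vanishes and $M(0)$ decomposes --- so no such brick exists. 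Second, even setting that aside, the $E^1$-quiver cannot be a single directed cycle: there are no arrows between vertex $n-1$ and the vertices $2,\ldots,n-2$, so $S(n-1)$ has no ${\rm Ext}^1$-edge to or from any other simple in your set, and hence cannot lie on a cycle that also visits the $\alpha$-branch simples. The paper's construction avoids both problems by putting $M(0)$ on \emph{both} the $\beta$- and $\gamma$-branches simultaneously (support $\{1,n-1,n,n+1\}$, all fibers $\Bbbk$, with $\beta_1\beta_2+\gamma_1\gamma_2=0$ and both nonzero, which is possible exactly because $\alpha$ vanishes there and $\beta,\gamma\notin\mathcal{I}$), while the simples are taken only on the $\alpha$-branch, $S(2),\ldots,S(n-2)$; the cycle is then $S(n-2)\to\cdots\to S(2)\to M(0)\to S(n-2)$. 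Also note the paper reduces first to ``at least two of $\alpha,\beta,\gamma$ are not in $\mathcal{I}$'' via the canonical relation, which is the precise hypothesis you need to make the two-branch $M(0)$ a brick; your weaker ``at least one of $\beta,\gamma$ is not a scalar multiple of $\alpha$'' would not suffice by itself.
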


\begin{proof}
(1) If there exists scalars $c_1,c_2\ne0$ such that $\alpha_1\cdots\alpha_{n-2}+c_1\beta_1\beta_2$ and $\alpha_1\cdots\alpha_{n-2}+c_2\gamma_1\gamma_{2}$ belong to $\mathcal{I}$, then ${\rm fpd}_{D}(E^1)=0$. The proof is similar to Lemma \ref{lem4.4} and Theorem \ref{xxthm4.3}.

(2) If there are no scalars $c_1,c_2\ne0$ such that $\alpha_1\cdots\alpha_{n-2}+c_1\beta_1\beta_2,\alpha_1\cdots\alpha_{n-2}+c_2\gamma_1\gamma_{2}\in\mathcal{I}$. Let $\alpha=\alpha_1\cdots\alpha_{n-2}, \beta=\beta_1\beta_2$ and $\gamma=\gamma_1\gamma_{2}$. Then at least two of them do not belong to $\mathcal{I}$. Otherwise, there are two belonging to $\mathcal{I}$. Since $\alpha+\beta+\gamma$ belongs to $\mathcal{I}$, the other one also belongs to $\mathcal{I}$.  And then $\alpha+\beta,\beta+\gamma$ belong to $\mathcal{I}$ which contradicts the assumption.

Without loss of generality, assume $\beta,\gamma$ not belong to $\mathcal{I}$. In this case, neither of $\alpha+c\beta,\alpha+c\gamma$ belong to $\mathcal{I}$ for any scalar $c\ne0$. Consider the brick set $\phi=\{S(2),S(3),\cdots,S(n-3),M(0)\},$ where $M(0)$ is defined as follow\[\begin{tikzpicture}
\node (1) at (0,-.2) {$\Bbbk$};
\node (2) at (0.2,0.2) {$0$};
\node (n) at (1,0.2) {$0$};
\node (n+1) at (.6,0) {$\Bbbk$};
\node (n+m+1) at (1.2,-.2) {$\Bbbk$};
\node (a) at (.6,-.4) {$\Bbbk$};
\draw[dashed] (n) -- (2);
\end{tikzpicture}\]It is clear that ${\rm Ext}_{D}^{1}(S(i),S(i-1))\ne0,3\le i\le n-2$, ${\rm Ext}_{D}^{1}(S(2),M(0))\ne0$, ${\rm Ext}_{D}^{1}(M(0),S(n-2))\ne0$. Therefore we get\[
{\rm fpd}_{D}(E^1)\ge \rho\begin{pmatrix}
   	0 & 1&0&\cdots &0\\
   	0 & 0&1&\cdots&0\\
   \vdots&\vdots&\vdots&&\vdots\\
   0&0&0&\cdots&1\\
   1&0&0&\cdots&0
   	\end{pmatrix}=1.
\] It follows that ${\rm fpd}_{D}(E^1)=1$.

\end{proof}

\begin{thm}
Let $E=E_{\mathcal{I}}(n)$.
 Then we have \[{\rm fpd}_{E}(E^1)=0\ or\ 1.\] Moreover, ${\rm fpd}_{E}(E^1)=0$ if and only if there exists scalars $c_1,c_2\ne0$ such that $\alpha_1\cdots\alpha_{n-3}+c_1\beta_1\beta_2$ and $\alpha_1\cdots\alpha_{n-3}+c_2\gamma_1\gamma_{2}\gamma_3$ belong to $\mathcal{I}$.
\end{thm}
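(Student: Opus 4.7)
The plan is to follow the template of Theorems 5.3 and 5.4, splitting the argument into the two directions of the equivalence and bounding above by the full canonical algebra.

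\textbf{Sufficiency.} I would first treat the extremal ideal $\mathcal{I}_0 = \langle \alpha_1\cdots\alpha_{n-3} + c_1 \beta_1\beta_2,\; \alpha_1\cdots\alpha_{n-3} + c_2 \gamma_1\gamma_2\gamma_3\rangle$ with $c_1, c_2 \ne 0$, and prove a direct analogue of Lemma \ref{lem4.4}: the Auslander--Reiten quiver of $\Bbbk Q_E/\mathcal{I}_0$ is assembled from two representation-finite halves (read off the subquivers obtained by deleting vertex $n+1$ and vertex $1$) glued along the indecomposable projective-injective module $P(n+1) = I(1)$. This shows $\Bbbk Q_E/\mathcal{I}_0$ is representation-directed, and Corollary \ref{cor1} then yields $\mathrm{fpd} = 0$. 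For a general $\mathcal{I}$ containing both generators, $E$ is a quotient of $\Bbbk Q_E/\mathcal{I}_0$, so Proposition \ref{prop1} gives $\mathrm{fpd}_E(E^1) = 0$.

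\textbf{Necessity.} Suppose no such scalars exist. Set $\alpha = \alpha_1\cdots\alpha_{n-3}$, $\beta = \beta_1\beta_2$, $\gamma = \gamma_1\gamma_2\gamma_3$. The same bookkeeping as in the $D$ case shows that at least two of $\alpha, \beta, \gamma$ must be nonzero in $E$: if two of them vanished, then since $\alpha + \beta + \gamma \in \mathcal{I}$ the third vanishes as well, forcing both $\alpha + \beta$ and $\alpha + \gamma$ to lie in $\mathcal{I}$, contradicting our hypothesis. By the symmetry of the three arms it suffices to handle one case, say $\beta, \gamma \notin \mathcal{I}$. I would then construct a brick module $M(0)$ supported on vertex $1$, the interior of the $\beta$- and $\gamma$-arms, and vertex $n+1$ (with appropriate $\Bbbk$-values making the $\beta$- and $\gamma$-compositions nontrivial) and combine it with the simples along the $\alpha$-arm into a brick set $\phi$ whose $\mathrm{Ext}^1$-adjacency matrix has the cyclic shape
\[
\begin{pmatrix} 0 & 1 & 0 & \cdots & 0\\ 0 & 0 & 1 & \cdots & 0\\ \vdots & \vdots & \vdots & & \vdots\\ 0 & 0 & 0 & \cdots & 1\\ 1 & 0 & 0 & \cdots & 0\end{pmatrix}
\]
of spectral radius $1$. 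This establishes $\mathrm{fpd}_E(E^1) \ge 1$. The matching upper bound $\le 1$ follows from Proposition \ref{prop1} applied to the surjection from the canonical algebra $\hat{E}$ of type $E$ onto $E$, using $\mathrm{fpd}_{\hat{E}}(E^1) = 1$ as in the $A$ and $D$ proofs.

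\textbf{Main obstacle.} The delicate step is the construction of $M(0)$. It must (a) be a legitimate representation of $E$, respecting every relation in $\mathcal{I}$ rather than only the canonical relation; (b) be a brick; and (c) yield nontrivial $\mathrm{Ext}^1$ exactly at the endpoints of the $\alpha$-arm and no extra interactions with the interior simples, so that the adjacency matrix is genuinely a cyclic permutation matrix. The length-three $\gamma$-arm is the new feature compared to the $D$ case; it introduces an interior vertex on that arm, so one must distribute the $\Bbbk$-values on $M(0)$ carefully so that the composition $\gamma_1\gamma_2\gamma_3$ acts nontrivially, witnessing $\gamma \ne 0$, without accidentally spoiling brick-ness or creating spurious nonzero entries in the adjacency matrix.
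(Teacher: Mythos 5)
Your proposal is correct and follows the same route the paper intends: the paper's own proof of this theorem is just the one line ``The proof is similar to Theorem \ref{xxthm4.4},'' and your plan is exactly the unpacking of that template — glue two Dynkin halves along a projective-injective to get a representation-directed model for the extremal ideal, pass to quotients via Proposition \ref{prop1}, and in the other direction build a one-parameter-free brick module $M(0)$ on two of the arms together with the interior simples on the third arm to realize a cyclic $\mathrm{Ext}^1$-matrix of spectral radius $1$. One small imprecision worth tightening: ``by the symmetry of the three arms'' is not literally available here, since the $\alpha$-, $\beta$-, $\gamma$-arms have the distinct lengths $n-3$, $2$, $3$; what actually happens is that the same recipe (put $M(0)$ on the two arms whose compositions survive, take the interior simples of the remaining arm) produces a cycle of a different length in each of the three sub-cases, so one should either say ``mutatis mutandis'' rather than ``by symmetry,'' or spell out the three brick sets explicitly, noting as you do that the extra interior vertex on the $\gamma$-arm is the only genuinely new feature compared with type $D$.
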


\begin{proof}
The proof is similar to Theorem \ref{xxthm4.4}.
\end{proof}

\section{Examples}
\label{xxsec5}
In this section we give some examples.

\begin{example}
    Let $A$ be a path algebra of a Dynkin quiver. As is known to us, $A$ is representation-directed, it follows that \[{\rm fpd}_{A}(E^{m})=0, m=1,2,3,\cdots..\]
\end{example}

\begin{example}
	Define quiver $Q$ as follow.
	\[
	\begin{tikzpicture}
	\node (1) at (0,0) {1};
	\node (2) at (1.5,1) {2};
	\node (3) at (1.5,-1){3};
	\node (4) at (3,0){4};
	\draw[->] (2) --node[above ]{$\alpha$} (1);
	\draw[->] (3) --node[below ]{$\gamma$} (1);
	\draw[->] (4) --node[above]{$\beta$} (2);
	\draw[->] (4) --node[below ]{$\delta$} (3);
	\end{tikzpicture}
	\]
	
	(1) Let $A_1=kQ/(\alpha\beta-\gamma\delta)$, then the Auslander-Reiten quiver of ${\rm mod}A_1$ is as following.
		\[
	\begin{tikzpicture}
	\node (1) at (0+4,.3+4) {1};
	\node (2) at (0+4,-.3+4) {0};
	\node (3) at (.3+4,0+4){0};
	\node (4) at (-.3+4,0+4){0};
	\node (1) at (2+4,2.3+4) {1};
	\node (2) at (2+4,1.7+4) {1};
	\node (3) at (2.3+4,0+2+4){1};
	\node (4) at (1.7+4,0+2+4){0};
	\node (1) at (2+6,2.3+6) {1};
	\node (2) at (2+6,1.7+6) {0};
	\node (3) at (2.3+6,0+2+6){1};
	\node (4) at (1.7+6,0+2+6){0};
	
	\node (1) at (0,.3+4) {0};
	\node (2) at (0,-.3+4) {1};
	\node (3) at (.3,0+4){0};
	\node (4) at (-.3,0+4){1};
	\node (1) at (2,2.3+4) {1};
	\node (2) at (2,1.7+4) {1};
	\node (3) at (2.3,0+2+4){0};
	\node (4) at (1.7,0+2+4){1};
	\node (1) at (2+6-4,2.3+6) {0};
	\node (2) at (2+6-4,1.7+6) {1};
	\node (3) at (2.3+6-4,0+2+6){0};
	\node (4) at (1.7+6-4,0+2+6){0};
	
	\node (1) at (2-4,2.3+4) {0};
	\node (2) at (2-4,1.7+4) {0};
	\node (3) at (2.3-4,0+2+4){0};
	\node (4) at (1.7-4,0+2+4){1};
	\node (1) at (2+6-4-4,2.3+6) {1};
	\node (2) at (2+6-4-4,1.7+6) {0};
	\node (3) at (2.3+6-4-4,0+2+6){0};
	\node (4) at (1.7+6-4-4,0+2+6){1};

	\node (1) at (2+6-4,2.3+6-1.2) {1};
	\node (2) at (2+6-4,1.7+6-1.2) {1};
	\node (3) at (2.3+6-4,0+2+6-1.2){1};
	\node (4) at (1.7+6-4,0+2+6-1.2){1};
	
	\node (1) at (0+4+4,.3+4) {0};
	\node (2) at (0+4+4,-.3+4) {1};
	\node (3) at (.3+4+4,0+4){1};
	\node (4) at (-.3+4+4,0+4){0};
	\node (1) at (2+4+4,2.3+4) {0};
	\node (2) at (2+4+4,1.7+4) {0};
	\node (3) at (2.3+4+4,0+2+4){1};
	\node (4) at (1.7+4+4,0+2+4){0};
	
	\draw[->] (4.5,4.5) -- (5.5,5.5);
	\draw[->] (6.5,6.5) -- (7.5,7.5);
	\draw[->] (8.5,4.5) -- (9.5,5.5);
	\draw[->] (0.5,4.5) -- (1.5,5.5);
	\draw[->] (2.5,6.5) -- (3.5,7.5);
	\draw[->] (-1.5,6.5) -- (-0.5,7.5);

	\draw[->] (2.5,6.3) -- (3.5,6.8);
	\draw[->] (4.5,6.8) -- (5.5,6.3);
	
	\draw[->] (0.5,7.5) -- (1.5,6.5);
	\draw[->] (2.5,5.5) -- (3.5,4.5);
	\draw[->] (4.5,7.5) -- (5.5,6.5);
	\draw[->] (6.5,5.5) -- (7.5,4.5);
	\draw[->] (-1.5,5.5) -- (-.5,4.5);
	\draw[->] (8.5,7.5) -- (9.5,6.5);
	\end{tikzpicture}
	\]
	
Since $A_1$ is a representation-directed algebra, there is no cycle in ${\rm mod}A_1$. Thus we have \[{\rm fpd}({\rm mod}A_1)=0.\]
	
	(2) Let $A_2=kQ/(\alpha\beta)$, then the Auslander-Reiten quiver of ${\rm mod}A_2$ is as following.
	\[
	\begin{tikzpicture}
    \node (1) at (0+8,.3) {1};
    \node (2) at (0+8,-.3) {0};
    \node (3) at (.3+8,0){0};
    \node (4) at (-.3+8,0){0};
	\node (1) at (0,.3) {1};
	\node (2) at (0,-.3) {0};
	\node (3) at (.3,0){0};
	\node (4) at (-.3,0){0};
	\node (1) at (2,2.3) {1};
	\node (2) at (2,1.7) {1};
	\node (3) at (2.3,0+2){1};
	\node (4) at (1.7,0+2){1};
	\node (1) at (0+4,.3+4) {2};
	\node (2) at (0+4,-.3+4) {1};
	\node (3) at (.3+4,0+4){1};
	\node (4) at (-.3+4,0+4){1};
	\node (1) at (2+4,2.3+4) {1};
	\node (2) at (2+4,1.7+4) {1};
	\node (3) at (2.3+4,0+2+4){1};
	\node (4) at (1.7+4,0+2+4){0};
	\node (1) at (2+6,2.3+6) {1};
	\node (2) at (2+6,1.7+6) {0};
	\node (3) at (2.3+6,0+2+6){1};
	\node (4) at (1.7+6,0+2+6){0};
	
	\node (1) at (0,.3+4) {0};
	\node (2) at (0,-.3+4) {1};
	\node (3) at (.3,0+4){0};
	\node (4) at (-.3,0+4){1};
	\node (1) at (2,2.3+4) {1};
	\node (2) at (2,1.7+4) {1};
	\node (3) at (2.3,0+2+4){0};
	\node (4) at (1.7,0+2+4){1};
	\node (1) at (2+6-4,2.3+6) {0};
	\node (2) at (2+6-4,1.7+6) {1};
	\node (3) at (2.3+6-4,0+2+6){0};
	\node (4) at (1.7+6-4,0+2+6){0};
	
	\node (1) at (2-4,2.3+4) {0};
	\node (2) at (2-4,1.7+4) {0};
	\node (3) at (2.3-4,0+2+4){0};
	\node (4) at (1.7-4,0+2+4){1};
	\node (1) at (2+6-4-4,2.3+6) {1};
	\node (2) at (2+6-4-4,1.7+6) {0};
	\node (3) at (2.3+6-4-4,0+2+6){0};
	\node (4) at (1.7+6-4-4,0+2+6){1};
	
	\node (1) at (0+4,.3) {0};
	\node (2) at (0+4,-.3) {1};
	\node (3) at (.3+4,0){1};
	\node (4) at (-.3+4,0){1};
	\node (1) at (2+4,2.3) {1};
	\node (2) at (2+4,1.7) {1};
	\node (3) at (2.3+4,0+2){1};
	\node (4) at (1.7+4,0+2){1};
	\node (1) at (0+4+4,.3+4) {0};
	\node (2) at (0+4+4,-.3+4) {1};
	\node (3) at (.3+4+4,0+4){1};
	\node (4) at (-.3+4+4,0+4){0};
	\node (1) at (2+4+4,2.3+4) {0};
	\node (2) at (2+4+4,1.7+4) {0};
	\node (3) at (2.3+4+4,0+2+4){1};
	\node (4) at (1.7+4+4,0+2+4){0};
	
	\draw[->] (0.5,0.5) -- (1.5,1.5);
	\draw[->] (2.5,2.5) -- (3.5,3.5);
	\draw[->] (4.5,4.5) -- (5.5,5.5);
	\draw[->] (6.5,6.5) -- (7.5,7.5);
	\draw[->] (4.5,0.5) -- (5.5,1.5);
	\draw[->] (6.5,2.5) -- (7.5,3.5);
	\draw[->] (8.5,4.5) -- (9.5,5.5);
	\draw[->] (0.5,4.5) -- (1.5,5.5);
	\draw[->] (2.5,6.5) -- (3.5,7.5);
	\draw[->] (-1.5,6.5) -- (-0.5,7.5);
	
	\draw[->] (0.5,7.5) -- (1.5,6.5);
	\draw[->] (2.5,5.5) -- (3.5,4.5);
	\draw[->] (4.5,3.5) -- (5.5,2.5);
	\draw[->] (6.5,1.5) -- (7.5,0.5);
	\draw[->] (4.5,7.5) -- (5.5,6.5);
	\draw[->] (6.5,5.5) -- (7.5,4.5);
	\draw[->] (-1.5,5.5) -- (-.5,4.5);
	\draw[->] (0.5,3.5) -- (1.5,2.5);
	\draw[->] (2.5,1.5) -- (3.5,0.5);
	\draw[->] (8.5,7.5) -- (9.5,6.5);
	\draw[dashed](0,1) -- (0,-1);
	\draw[dashed](8,1) -- (8,-1);
	\draw(2,2.3) -- (1.7,2);
	\draw(6,2.3) -- (6.3,2);
	\end{tikzpicture}
	\]
	
There is a brick set consisting of$$
\begin{tikzpicture}
\node (0) at (-1,1.6+4) {,};
    \node (1) at (2-4,2.3+4) {1};
	\node (2) at (2-4,1.7+4) {0};
	\node (3) at (2.3-4,0+2+4){0};
	\node (4) at (1.7-4,0+2+4){0};
	\node (1) at (0,2.3+4) {0};
	\node (2) at (0,1.7+4) {1};
	\node (3) at (.3,0+2+4){1};
	\node (4) at (1.7-2,0+2+4){1};

\end{tikzpicture}
$$And the corresponding matrix is $\begin{pmatrix}
	0 & 1 \\
	1 & 0
	\end{pmatrix}
	$. Thus \[{\rm fpd}({\rm mod}A_2)=1.\]

\end{example}

\noindent\bf{\footnotesize Acknowledgements}\quad\rm{\footnotesize The authors would like to thank Yanan Lin and Shiquan Ruan for useful discussions, and thank the referees for their helpful comments. This work is supported by the National Natural Science Foundation of China (Grant Nos. 11971398 and 12131018).}\\[4mm]

\noindent{\bbb{References}}
\begin{enumerate}
{\footnotesize

\bibitem{A}\label{A}
Auslander M.
A functorial approach to representation theory.
Lecure Notes in Math, 1982, 944: 105-–179\\[-6.5mm]

\bibitem{ASS}\label{ASS}
Assem I, Simson D, Skowro{\'n}ski A.
Elements of the representation theory of associative algebras
Vol 1 Techniques of representation theory. London Mathematical Society Student Texts 65.
New York: Cambridge University Press, 2006\\[-6.5mm]

\bibitem{CG1}\label{CG1}
Chen J M, Gao Z B, Wicks E, Zhang J J, Zhang X H, Zhu H. Frobenius-Perron theory of endofunctors.
Algebra Number Theory, 2019, 13(9): 2005--2055\\[-6.5mm]

\bibitem{CG2}\label{CG2}
Chen J M, Gao Z B, Wicks E, Zhang J J, Zhang X H, Zhu H. Frobenius-Perron
theory for projective schemes. preprint (2019), arXiv:1907.02221\\[-6.5mm]

\bibitem{DG}\label{DG}
Dokuchaev M A, Gubareni N M, Futorny V M, Khibina M A, Kirichenko V V.
Dynkin diagrams and spectra of graphs.
S{\~a}o Paulo J Math Sci, 2013, 7(1): 83--104\\[-6.5mm]

\bibitem{EG}\label{EG}
Etingof P, Gelaki S, Nikshych D, Ostrik V.
Tensor categories.
Rhode Island: American Mathematical Society, 2015\\[-6.5mm]

\bibitem{EGO}\label{EGO}
Etingof P, Gelaki S, Ostrik V.
Classification of fusion categories of dimension $pq$.
Int Math Res Not IMRN, 2004, 57: 3041--3056\\[-6.5mm]

\bibitem{ENO}\label{ENO}
Etingof P, Nikshych D, Ostrik V.
On fusion categories.
Ann of Math (2), 2005, 162(2): 581--642\\[-6.5mm]

\bibitem{EO}\label{EO}
Etingof P, Ostrik V. Finite tensor categories.
Mosc Math J, 2005, 4(3): 627--654\\[-6.5mm]

\bibitem{N}\label{N}
Nikshych D.
Semisimple weak Hopf algebras.
J Algebra, 2004, 275(2): 639--667\\[-6.5mm]

\bibitem{JP}\label{JP}
Jaworska-Pastuszak A, Pogorzaly Z.
Poisson structures for canonical algebras.
J Geom Phys, 2020, 148: 15 pp\\[-6.5mm]

\bibitem{K}\label{K}
Kasjan S. Representation-directed algebras form an open scheme.
Colloq Math, 2002, 93(2): 237--250\\[-6.5mm]

\bibitem{KK}\label{KK}
Kasjan S, Konsakowska J. On Lie algebras associated with representation-directed algebras.
J Pure Appl Algebra, 2010, 214: 678--688\\[-6.5mm]


\bibitem{O}\label{O}
Oort F. Yoneda extensions in abelian categories. Math Ann, 1964, 153: 227--235\\[-6.5mm]

\bibitem{PS}\label{PS}
Plamondon P G, Schiffmann O. Kac polynomials for canonical algebras. Int Math Res Not IMRN, 2019, 13: 3981–-4003\\[-6.5mm]

\bibitem{V}\label{V}
Vaso L. n-Cluster tilting subcategories of representation-directed algebras. J Pure Appl Algebra, 2019, 223: 2101--2122\\[-6.5mm]


\bibitem{ZZ}\label{ZZ}
Zhang J J, Zhou J H.
Frobenius-Perron theory of representations of quivers.
preprint (2020), arXiv:2004.09111\\[-6.5mm]

}
\end{enumerate}
\end{document}